\theoremstyle{plain}
\newtheorem{Thm}{Theorem}[section]
\newtheorem{Lem}[Thm]{Lemma}
\newtheorem{Cor}[Thm]{Corollary}
\newtheorem{Def}[Thm]{Definition}
\newtheorem{Rem}[Thm]{Remark}
\newcommand{\lomegaone}{\ensuremath{\mathcal{L}_{\omega_1,\omega}}}
\newcommand{\fA}{\ensuremath{\mathfrak{A}}}
\newcommand{\fB}{\ensuremath{\mathfrak{B}}}
\newcommand{\fM}{\ensuremath{\mathfrak{M}}}
\newcommand{\restrict}{\upharpoonright}
\newcommand{\fried}{FriedmanEtAllNonAbsolutenessOfModelExistence}
\newcommand{\suchthat}{\mathrel{\mid}}
\newcommand{\inv}[1]{{#1}^{-1}}
\newcommand{\card}[1]{\left\lvert #1\right\rvert}
\newcommand{\alp}[1]{\aleph_{#1+1}}
\newcommand{\at}[1]{$#1$-Aronszajn tree}
\newcommand{\sat}[1]{special \at{#1}}
\newcommand{\csat}[1]{coherent \sat{#1}}
\newcommand{\alephs}[1]{\ensuremath{\aleph_{#1}}}
\def\bk{\mbox{$\mathbf{K}$}}
\def\bkalpha{\mbox{$\mathbf{K}_\alpha$}}
\DeclareMathOperator{\cf}{cf}
\DeclareMathOperator{\ran}{ran}
\DeclareMathOperator{\dom}{dom}
\title{Non-Absoluteness of Model Existence at $\aleph_\omega$}
\author{David Milovich}
\address{Dept. of Mathematics and Physics, Texas A\&M International University, 5201 University Blvd., Laredo, TX 
78045, 
USA}
\email{david.milovich@tamiu.edu}
\author{Ioannis Souldatos }
\address{4001 W.McNichols, Mathematics Department, University of Detroit Mercy, Detroit, MI 48221, USA}
\email{souldaio@udmercy.edu}
\date{}
\subjclass[2010]{Primary 03C48, 03C55 , Secondary 03E35, 03E55, 03C52, 03C75}
\keywords{Absoluteness, Model Existence, Amalgamation, Infinitary Logic, Abstract Elementary Classes}
\begin{document}

\baselineskip=17pt

\begin{abstract}
In \cite{FriedmanEtAllNonAbsolutenessOfModelExistence},
the authors considered the question whether model-existence of 
$\lomegaone$-sentences is absolute for transitive models of ZFC,
in the sense that  if $V \subseteq W$ are transitive 
models of ZFC with the same ordinals, $\varphi\in V$ and
$V\models ``\varphi \text{ is an } \lomegaone\text{-sentence}"$,
then $V \models\Phi$ if and only if $W \models\Phi$ where $\Phi$
is a first-order sentence with parameters $\varphi$ and $\alpha$
asserting that $\varphi$ has a model of size $\aleph_\alpha$.

From \cite{FriedmanEtAllNonAbsolutenessOfModelExistence} we know that the answer is positive for $\alpha=0,1$ and under 
the negation of CH,
the answer is negative for all $\alpha>1$.
Under GCH, and assuming the consistency of a supercompact cardinal,
the answer remains negative for each $\alpha>1$, except the 
case when $\alpha=\omega$ which is an open question in \cite{FriedmanEtAllNonAbsolutenessOfModelExistence}.

We answer the open question by providing a negative answer under GCH even for $\alpha=\omega$. Our examples are incomplete 
sentences. In fact, the same sentences can be 
used to prove a negative answer under GCH for all $\alpha>1$ assuming the consistency of a Mahlo cardinal. Thus, the large 
cardinal assumption is relaxed from a supercompact in \cite{FriedmanEtAllNonAbsolutenessOfModelExistence} to a Mahlo cardinal.

Finally, we consider the absoluteness question for the 
$\aleph_\alpha$-amalgamation property of $\lomegaone$-sentences (under \emph{substructure}). We prove that assuming GCH, 
$\aleph_\alpha$-amalgamation is non-absolute for $1<\alpha<\omega$. This answers a question from 
\cite{SinapovaSKurepa}. The cases $\alpha=1$ and $\alpha$ infinite remain open. As a corollary we get that it is non-absolute 
that the amalgamation spectrum of an $\lomegaone$-sentence is empty.
\end{abstract}
\maketitle

\section{Introduction}

The current paper adds to the literature that investigates which notions for infinitary logics, or more generally for 
abstract elementary classes,  are absolute for models of ZFC. Some notions like satisfiability\footnote{We mean 
that given a model $M$ and a sentence $\phi$, the statement ``$M\models\phi$'' is absolute.}, 
model-existence in $\aleph_0$ and $\aleph_1$, model-existence in some $\kappa\ge\beth_{\omega_1}$, $\aleph_0$-amalgamation and 
$\aleph_0$-joint embedding are absolute between transitive models of ZFC
(see \cite{BaldwinAmalgamation,FriedmanEtAllNonAbsolutenessOfModelExistence, GrossbergShelahNonisomorphic}). 
Other notions such as model-existence in $\aleph_\alpha$, $\alpha>1$, or existence of a maximum model in 
$\aleph_\alpha$, $\alpha>1$ are non-absolute (see 
\cite{FriedmanEtAllNonAbsolutenessOfModelExistence, JEP, Complete}). Unfortunately, the absoluteness question remains open for 
a wide range of notions, such as $\aleph_1$-categoricity for $\lomegaone$-sentences, 
$\aleph_1$- amalgamation, and  $\aleph_1$-joint embedding, to name a few.

The notions we consider in this paper are ``model-existence'' and ``amalgamation''. For $\aleph_0$ and $\aleph_1$, model 
existence is an absolute 
notion for transitive models of ZFC. From \cite{MalitzsHanfNumber}, we know that there is a complete $\lomegaone$-sentence 
$\phi$ that characterizes $2^{\aleph_0}$. That is, $\phi$ has models in all (infinite) cardinalities less or equal to 
$2^{\aleph_0}$, but no larger models, and this is a theorem of ZFC. Under CH, $\phi$ has models in $\aleph_1$, but no models 
in $\aleph_2$. Under the negation of CH, $\phi$ has a model in $\aleph_2$. Hence, model-existence in $\aleph_2$ is not an 
absolute notion. 

Similarly, other consistent violations of GCH witness that
for each $1<\alpha<\omega_1$ model existence in $\aleph_\alpha$ is not absolute.

Recall that $\beth_{\omega_1}$ is the Hanf number for $\lomegaone$. {\it I.e.}, every $\lomegaone$-sentence which has models in all 
cardinalities below $\beth_{\omega_1}$, it also has arbitrarily large models.  By 
\cite{GrossbergShelahNonisomorphic}, the property that an $\lomegaone$-sentence has arbitrarily large models is absolute.  

So, it is natural to ask whether model-existence in $\aleph_\alpha$, with $\aleph_1<\aleph_\alpha<\beth_{\omega_1}$, is 
absolute for models of ZFC+GCH. The question was answered in \cite{FriedmanEtAllNonAbsolutenessOfModelExistence}, under large 
cardinal assumptions, except the case where $\alpha=\omega$. The large cardinal assumptions are different for successors of 
successors than for limit cardinals and successors of limits. 

The following result from \cite{FriedmanEtAllNonAbsolutenessOfModelExistence},
shows that, assuming the consistency of uncountably many inaccessibles,
model-existence in $\aleph_{\alpha+2}$ is a non-absolute notion
for $\lomegaone$-sentences, for all $\alpha<\omega_1$.

\begin{Thm}[\cite{FriedmanEtAllNonAbsolutenessOfModelExistence}, Theorem 7]
  \label{FKT1} Assume a ground model $V$ of ZFC+GCH in which
  there are uncountably many inaccessible cardinals
  and an inner model $M\subset V$ of ZFC+GCH and
  ``$\,\Diamond^+_\kappa$ holds for every regular uncountable $\kappa<\aleph_{\omega_1}$.''
  Then there is a generic extension $V[G]$ in which the GCH is true and
  model-existence in $\aleph_{\alpha+2}$ for $\lomegaone$-sentences
  is not absolute between $M$ and $V[G]$ for all $\alpha<\omega_1^M$.\footnote{
      The proofs of Theorems~\ref{FKT1}-\ref{FKT3}
      obtain $\omega_1^{V[G]}=\omega_1^V$.
  }
\end{Thm}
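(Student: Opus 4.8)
The plan is to produce, for each $\alpha<\omega_1^M$, an explicit $\lomegaone$-sentence $\varphi_\alpha\in M$ such that $M$ thinks $\varphi_\alpha$ has a model of size $\aleph_{\alpha+2}$ while $V[G]$ thinks it does not. The combinatorial engine is the existence of Kurepa trees: $\varphi_\alpha$ will code ``there is an $\aleph_{\alpha+1}$-Kurepa tree'', i.e.\ a tree of height $\aleph_{\alpha+1}$ with levels of size $\le\aleph_\alpha$ and at least $\aleph_{\alpha+2}$ cofinal branches. Such a tree exists in $M$ because $M\models\Diamond^+_{\aleph_{\alpha+1}^M}$ --- one of the hypotheses on $M$, available since $\aleph_{\alpha+1}^M<\aleph_{\omega_1}^M$ whenever $\alpha<\omega_1^M$ --- together with Jensen's theorem that $\Diamond^+_\kappa$ yields a $\kappa$-Kurepa tree; on the other side we will force so that $\aleph_{\alpha+2}^{V[G]}$ is the image of a $V$-inaccessible under a Levy/Silver collapse, so that $V[G]$ has no $\aleph_{\alpha+1}^{V[G]}$-Kurepa tree whatsoever. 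Everything is arranged to preserve GCH.

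The first step is the sentence. Using the (provably-in-ZFC) characterizations of $\aleph_\alpha$ and $\aleph_{\alpha+1}$ by $\lomegaone$-sentences (Hjorth and others), one builds, by now-standard coding, an $\lomegaone$-sentence $\varphi_\alpha$ --- recursive in $\alpha$, hence an element of $M$ --- over a vocabulary with a tree sort $T$, a sort $B$, and auxiliary sorts, asserting: $T$ is a tree (its predecessor sets are linearly ordered, an $\lomegaone$ statement); $T$ together with a level map and auxiliary injections is forced to have size $\le\aleph_{\alpha+1}$ and all levels of size $\le\aleph_\alpha$; and $B$ injects into the set of maximal chains of $T$. The \emph{coding lemma} is then that, provably in ZFC+GCH, $\varphi_\alpha$ has a model of size $\aleph_{\alpha+2}$ if and only if there is an $\aleph_{\alpha+1}$-Kurepa tree. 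The easy direction: under GCH an $\aleph_{\alpha+1}$-Kurepa tree has exactly $2^{\aleph_{\alpha+1}}=\aleph_{\alpha+2}$ cofinal branches, so putting $B$ equal to that set produces a model of size $\aleph_{\alpha+1}+\aleph_{\alpha+2}=\aleph_{\alpha+2}$. Conversely, a model of size $\aleph_{\alpha+2}$ has $|B|=\aleph_{\alpha+2}$ (since $|T|\le\aleph_{\alpha+1}$), so $T$ has at least $\aleph_{\alpha+2}$ maximal chains through a tree of height $\le\aleph_{\alpha+1}$ with levels of size $\le\aleph_\alpha$; as at most $\aleph_{\alpha+1}$ maximal chains are non-cofinal (at most one per maximal node, summed over $\le\aleph_{\alpha+1}$ levels of size $\le\aleph_\alpha$), at least $\aleph_{\alpha+2}$ are cofinal, and $T$ is (essentially) an $\aleph_{\alpha+1}$-Kurepa tree. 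With the first paragraph, this gives $M\models$``$\varphi_\alpha$ has a model of size $\aleph_{\alpha+2}$'' for every $\alpha<\omega_1^M$.

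The second step is the forcing. Let $\langle\kappa_\xi:\xi<\omega_1^V\rangle$ enumerate uncountably many inaccessibles of $V$, and let $\mathbb{P}$ be the Easton-support iteration whose $\xi$-th step is the Levy (or Silver) collapse $\mathrm{Coll}(\aleph_{\xi+1}^{V[G_\xi]},<\kappa_\xi)$. A routine induction shows that in $V[G]$ each $\kappa_\xi$ has become $\aleph_{\xi+2}^{V[G]}$, collapsed by a forcing with bottom cardinal $\aleph_{\xi+1}^{V[G]}$; that each initial segment $\mathbb{P}_\xi$ has size $<\kappa_\xi$, so $\kappa_\xi$ is still inaccessible in $V[G_\xi]$; and that the tail of $\mathbb{P}$ past stage $\xi$ is $<\kappa_\xi$-closed, hence closed under $\aleph_{\xi+1}^{V[G]}$-sequences. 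Standard reverse-Easton bookkeeping shows $\mathbb{P}$ preserves GCH and preserves $\omega_1$ (so $\omega_1^{V[G]}=\omega_1^V\ge\omega_1^M$). Silver's argument, run in $V[G_\xi]$, shows that collapsing the inaccessible $\kappa_\xi$ with $\mathrm{Coll}(\aleph_{\xi+1}^{V[G_\xi]},<\kappa_\xi)$ destroys every $\aleph_{\xi+1}^{V[G_\xi]}$-Kurepa tree; and since the $<\kappa_\xi$-closed tail adds no new $\aleph_{\xi+1}^{V[G]}$-tree and no new cofinal branch through one, $V[G]$ has no $\aleph_{\xi+1}^{V[G]}$-Kurepa tree. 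By the coding lemma, $V[G]\models$``$\varphi_\xi$ has no model of size $\aleph_{\xi+2}$''. Hence model existence in $\aleph_{\alpha+2}$ for $\lomegaone$-sentences is not absolute between $M$ and $V[G]$, for every $\alpha<\omega_1^M$.

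The main obstacle is making the two halves meet. One must verify, uniformly in $\xi$, that the iterated collapse really destroys the $\aleph_{\xi+1}^{V[G]}$-Kurepa hypothesis: Silver's argument has to survive both the earlier stages (which must keep $\kappa_\xi$ inaccessible, forcing the size bookkeeping above) and the later stages (whose tail must be closed enough not to resurrect a Kurepa tree), all while the full iteration preserves GCH and cardinals correctly --- this is where I expect the genuine work. The auxiliary delicate point is the $\lomegaone$-characterization of $\aleph_\alpha$ for limit $\alpha$ (a singular cardinal), needed to keep $\varphi_\alpha$ uniform across all $\alpha<\omega_1^M$; and one should double-check that the branch count in the coding lemma is a theorem of ZFC+GCH, which it is, the upper bound being merely $2^{\aleph_{\alpha+1}}=\aleph_{\alpha+2}$ and the count of non-cofinal maximal chains elementary.
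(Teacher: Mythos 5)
Your proposal follows essentially the same route as the paper, which simply cites Theorem~7 of \cite{FriedmanEtAllNonAbsolutenessOfModelExistence} and sketches exactly this argument: an $\lomegaone$-sentence coding an $\aleph_{\alpha+1}$-Kurepa tree/family, obtained in $M$ from $\Diamond^+$, and killed in $V[G]$ by Levy-collapsing inaccessibles to the $\aleph_{\alpha+2}$'s while preserving GCH. One small imprecision: non-cofinal maximal chains need not have a maximal node (they can die at limit levels), so the bound $\aleph_{\alpha+1}$ on their number should be justified by counting branches of the restrictions $T\restriction\delta$, $\delta<\aleph_{\alpha+1}$, using $2^{\aleph_\alpha}=\aleph_{\alpha+1}$ under GCH --- which still gives your coding lemma.
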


The way the proof of Theorem \ref{FKT1} goes is that for each $\alpha<\omega_1^M$ there exists an 
$\lomegaone$-sentence $\sigma^{\alpha+2}$ such that $\sigma^{\alpha+2}$ has a model of size $\alephs{\alpha+2}$ if and only if 
there exists an $\alephs{\alpha+1}$-Kurepa family.
Moreover, this equivalence is absolute between transitive models of ZFC that contain $\sigma^{\alpha+2}$.
It follows that $\sigma^{\alpha+2}$ has a model of size $\alephs{\alpha+2}$ in $M$.
In addition, since Levy collapsing an inaccessible to $\aleph_{\alpha+2}$
destroys all $\aleph_{\alpha+1}$-Kurepa trees, there is a generic extension of $V$ where $\sigma^{\alpha+2}$ does not have any models of 
size $\alephs{\alpha+2}$.
So, another form of Theorem~\ref{FKT1} is implicit in~\cite{\fried}.
Assuming $M$ and $V$ as in Theorem~\ref{FKT1} except now with just one inaccessible cardinal,
then, for each $\alpha<\omega_1^M$,
model-existence in $\aleph_{\alpha+2}$ for $\lomegaone$-sentences
is not absolute between $M$ and a forcing extension $V[G_\alpha]$ satisfying GCH.
This covers the case of successors of successor cardinals.

The following two results from \cite{FriedmanEtAllNonAbsolutenessOfModelExistence},
show that, assuming the consistency of a supercompact,
model-existence in $\aleph_\beta$ for $\lomegaone$-sentences
is not absolute between transitive models of ZFC+GCH,
for every countable $\beta>\omega$ not of the form $\alpha+2$.

\begin{Thm}[\cite{FriedmanEtAllNonAbsolutenessOfModelExistence}, Theorem 6]
  \label{FKT2} Assume a ground model $V$ of ZFC+GCH in which
  there is a supercompact cardinal
  and an inner model $M\subset V$ of ZFC+GCH and
  ``$\,\square^*_\lambda$ holds at every singular cardinal $\lambda<\aleph_{\omega_1}$.''
  Then there is a generic extension $V[G]$ in which the GCH is true and
  model-existence in $\aleph_{\alpha+1}$ for $\lomegaone$-sentences
  is not absolute between $M$ and $V[G]$ for all limit $\alpha<\omega_1^M$.
\end{Thm}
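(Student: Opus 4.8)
The plan is to mirror the argument behind Theorem~\ref{FKT1}, with the $\aleph_{\alpha+1}$-Kurepa family replaced by an object attached to the \emph{singular} cardinal $\aleph_\alpha$: when $\alpha$ is a countable limit ordinal, $\aleph_\alpha$ has cofinality $\omega$, and the natural combinatorial dividing line at $\aleph_{\alpha+1}$ is weak square at $\aleph_\alpha$. By Jensen's characterization, $\square^*_{\aleph_\alpha}$ holds if and only if there is a special $\aleph_{\alpha+1}$-Aronszajn tree; so the object to be coded is a special $\aleph_{\alpha+1}$-Aronszajn tree together with the coherent specializing apparatus witnessing $\square^*_{\aleph_\alpha}$ — call it $\mathcal{W}_\alpha$.

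First I would construct, uniformly in each limit $\alpha<\omega_1$, an $\lomegaone$-sentence $\sigma^{\alpha+1}$ whose models of cardinality $\aleph_{\alpha+1}$ code exactly the objects $\mathcal{W}_\alpha$: a model carries a tree relation of height $\aleph_{\alpha+1}$ with levels of size $<\aleph_{\alpha+1}$, a function specializing it into a copy of $\aleph_\alpha$, and the continuous increasing chains witnessing $\square^*_{\aleph_\alpha}$; conversely any $\mathcal{W}_\alpha$ is assembled into such a model. As in the standard Aronszajn-tree codings, the level and cofinality requirements pin the size of any model down to at most $\aleph_{\alpha+1}$. The key point is that ``$\sigma^{\alpha+1}$ has a model of size $\aleph_{\alpha+1}$'' $\Longleftrightarrow$ ``$\square^*_{\aleph_\alpha}$ holds'' is provable in ZFC, hence absolute between transitive models of ZFC containing $\sigma^{\alpha+1}$. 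Since $M$ satisfies $\square^*_\lambda$ for every singular $\lambda<\aleph_{\omega_1}$, this gives $M\models$ ``$\sigma^{\alpha+1}$ has a model of size $\aleph_{\alpha+1}$'' for every limit $\alpha<\omega_1^M$.

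Next I would produce $V[G]$ over $V$. First Laver-prepare the supercompact $\kappa$ so that its supercompactness is indestructible under $<\kappa$-directed-closed forcing, then force with a reverse-Easton iteration of length $\omega_1$ whose stage $\alpha$ (for $\alpha$ limit) is a sufficiently closed forcing destroying $\square^*_{\aleph_\alpha}$ — a Mitchell/Magidor--Shelah-style forcing for the tree property at $\aleph_{\alpha+1}$, fed by the still supercompact $\kappa$ — arranging the bookkeeping so that in $V[G]$ the GCH holds, $\omega_1^{V[G]}=\omega_1^{V}=\omega_1^{M}$, and $\square^*_{\aleph_\alpha}$ fails for every limit $\alpha<\omega_1$. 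Then by the coding, $V[G]\models$ ``$\sigma^{\alpha+1}$ has no model of size $\aleph_{\alpha+1}$'' for every limit $\alpha<\omega_1$; comparing with the previous paragraph, model-existence in $\aleph_{\alpha+1}$ is non-absolute between $M$ and $V[G]$ for all limit $\alpha<\omega_1^M$, since $\sigma^{\alpha+1}\in M\subseteq V[G]$.

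The soft part is the coding; the weight of the argument is in the forcing. The main obstacle is to extract failure of weak square at \emph{all} the singulars $\aleph_\alpha$, $\alpha<\omega_1$ limit, from a \emph{single} supercompact, simultaneously, without collapsing $\omega_1$ and without spoiling the GCH. This pins down the design of the iteration: each stage must be closed enough (roughly $<\aleph_\alpha$-closed) to protect smaller cardinals and the GCH, yet active enough at $\aleph_{\alpha+1}$, and the indestructible supercompactness of $\kappa$ must remain available at every one of the $\aleph_1$-many stages. It is precisely this use of a supercompact that the present paper later improves to a Mahlo cardinal, via a more economical choice of coded object.
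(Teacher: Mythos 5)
Your skeleton matches the argument the paper attributes to Friedman--Hyttinen--Koerwien: a sentence $\varphi^{\alpha+1}$ of $\lomegaone$ whose possession of a model of size $\aleph_{\alpha+1}$ is (absolutely) equivalent to the existence of a special $\aleph_{\alpha+1}$-Aronszajn tree, Jensen's equivalence of that tree with $\square^*_{\aleph_\alpha}$ so that the hypothesis on $M$ yields models of size $\aleph_{\alpha+1}$ in $M$, and a forcing over $V$ using the supercompact to destroy all such trees while preserving GCH and $\omega_1$. Two remarks on your coding step: you should not (and cannot straightforwardly) code the weak-square sequence itself, since clubs and well-foundedness are not expressible in $\lomegaone$; coding the tree with its specializing function suffices, with the usual chain of $|\omega_\beta|$-like predicates standing in for any direct mention of $\aleph_{\alpha+1}$, so that a model of size $\aleph_{\alpha+1}$ only yields a pseudotree from which a genuine special tree is extracted. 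These are exactly the points the cited coding handles, so this half is fine as a sketch.

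The genuine gap is in the forcing. The proposed Laver preparation followed by a length-$\omega_1$ reverse-Easton iteration of Mitchell/Magidor--Shelah-type forcings for the tree property at each $\aleph_{\alpha+1}$, ``fed by the still supercompact $\kappa$,'' does not work as described. Laver indestructibility protects $\kappa$ only against ${<}\kappa$-directed-closed forcing, whereas your stage forcings act at cardinals $\aleph_{\alpha+1}<\aleph_{\omega_1}$, far below $\kappa$, so indestructibility is inapplicable. More fundamentally, Mitchell- and Magidor--Shelah-style constructions achieve their effect by \emph{collapsing} large cardinals down to the target cardinal (and the successor-of-a-singular case consumes infinitely many supercompacts per instance), so a single $\kappa$ cannot remain supercompact, sit above $\aleph_{\omega_1}$, and still induce failure of $\square^*_{\aleph_\alpha}$ at $\omega_1$-many small singulars stage by stage; moreover the full tree property at every such $\aleph_{\alpha+1}$ simultaneously with GCH is not known to follow from one supercompact and is far more than is needed, since only ``no special $\aleph_{\alpha+1}$-Aronszajn tree,'' i.e.\ failure of $\square^*_{\aleph_\alpha}$, is required. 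The standard route, and the one the cited proof relies on, is a single Levy collapse $\mathrm{Col}(\omega_1,{<}\kappa)$ making the supercompact into $\aleph_2$: it is countably closed, preserves $\omega_1$ and GCH, and afterwards every $\aleph_\alpha$ with $\alpha$ a countable limit is a singular cardinal of cofinality $\omega$ lying \emph{above} the former supercompact, where the reflection inherited from supercompactness (together with the fact that this ${<}\lambda$-sized forcing cannot create weak square at such $\lambda$) yields failure of $\square^*_{\aleph_\alpha}$, hence no special $\aleph_{\alpha+1}$-Aronszajn trees, for all these $\alpha$ at once. With that replacement for your iteration, the rest of your argument goes through.
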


\begin{Thm}[\cite{FriedmanEtAllNonAbsolutenessOfModelExistence}, Section 3.5]
  \label{FKT3} Assume $M$ and $V$ is as in Theorem~\ref{FKT2}.
  Then there is a generic extension $V[G]$ in which the GCH is true and
  model-existence in $\aleph_\alpha$ for $\lomegaone$-sentences
  is not absolute between $M$ and $V[G]$
  for all limit $\alpha<\omega_1^M$ except possibly $\omega$.
\end{Thm}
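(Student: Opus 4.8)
The plan is to imitate the template of Theorems~\ref{FKT1} and~\ref{FKT2}: for each limit ordinal $\alpha$ with $\omega<\alpha<\omega_1^M$ one produces an $\lomegaone$-sentence $\sigma^\alpha$ and a combinatorial statement $P_\alpha$ (quantifying over cardinals below $\aleph_{\omega_1}$) such that, provably in ZFC, $\sigma^\alpha$ has a model of size $\aleph_\alpha$ if and only if $P_\alpha$ holds; since both sides of this equivalence are sufficiently absolute in the parameter $\sigma^\alpha$, it transfers between any two transitive models of ZFC containing $\sigma^\alpha$. Granting such sentences, the theorem reduces to two tasks: (i) showing $M\models P_\alpha$ for all relevant $\alpha$, using only GCH and the weak-square hypothesis available in $M$; and (ii) producing a single generic extension $V[G]\supseteq V$ satisfying GCH in which $P_\alpha$ fails for every such $\alpha$ at once, with $\omega_1^{V[G]}=\omega_1^V$ so that the range ``$\alpha<\omega_1^M$'' is not altered.

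The construction of $\sigma^\alpha$ exploits that, for $\alpha$ a limit, $\aleph_\alpha$ is a limit cardinal: a model of size $\aleph_\alpha$ splits as an increasing union of pieces of sizes $\aleph_\beta$, with the $\beta$'s cofinal in $\alpha$. I would design $\sigma^\alpha$ so that each such piece is forced to carry a copy of one of the tree-like objects used in the successor and successor-of-limit cases --- a special $\aleph_\beta$-Aronszajn tree, an $\aleph_\beta$-Kurepa family, or the $\square^*$-coding behind Theorems~\ref{FKT1} and~\ref{FKT2} --- on an appropriate cardinal $\aleph_\beta$ or $\aleph_{\beta+1}$ below $\aleph_\alpha$, with that piece pinned to its exact cardinality by an attached characterizing gadget. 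Then ``$\sigma^\alpha$ has a model of size $\aleph_\alpha$'' unwinds to ``$P_\alpha$: the relevant object exists at cofinally many cardinals below $\aleph_\alpha$'' (equivalently, on a tail), the exactness clauses ensuring the model is neither too small nor too large. With this in hand, task (i) is the routine direction: Jensen's theorems extract from $\square^*_\lambda+\mathrm{GCH}$ special Aronszajn and Kurepa trees at $\lambda^+$, and GCH settles the remaining successor combinatorics, so every object demanded by $P_\alpha$ is present in $M$. Task (ii) is where the large cardinal is spent: the Levy collapse of the supercompact $\kappa$ (in the variant used in Theorem~\ref{FKT2}, so that $\kappa$ comes to occupy its place below $\aleph_{\omega_1}$ while GCH is restored by the usual nice-name count) destroys the relevant special trees and Kurepa families on a tail of cardinals below it, reflection from the former supercompact defeating them at all those successor cardinals, so $P_\alpha$ fails in $V[G]$ for every limit $\alpha$ in range.

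The technical heart, and the reason $\alpha=\omega$ must be excluded, lies in the exactness bookkeeping together with the cofinality of $\aleph_\alpha$. For $\alpha>\omega$ limit there are genuinely cofinally many ``movable'' cardinals $\aleph_\beta<\aleph_\alpha$ on which to hang the combinatorics --- and these are precisely the cardinals the supercompact collapse acts on --- so the main obstacle is to verify that $\sigma^\alpha$ really forbids models of size $<\aleph_\alpha$ (no model may omit a tail of pieces) and of size $>\aleph_\alpha$ (no piece may overgrow), and that the resulting equivalence with $P_\alpha$ is provable in ZFC, hence absolute between $M$ and $V[G]$. When $\alpha=\omega$ this breaks down: the only infinite cardinals below $\aleph_\omega$ are the $\aleph_n$, all successors of finite ordinals, the combinatorial objects one would code on the pieces of a model of size $\aleph_\omega$ live on these, and a supercompact collapse sitting above $\aleph_\omega$ does not disturb them --- so one cannot make $P_\omega$ fail while keeping GCH and fixing $\omega_1$. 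Handling $\aleph_\omega$ therefore calls for a different idea, which is the business of the present paper.
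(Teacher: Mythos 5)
Your overall template (code combinatorics into an $\lomegaone$-sentence whose $\aleph_\alpha$-model-existence is provably equivalent to that combinatorics, verify the combinatorics in $M$ from GCH plus weak square, destroy it in a GCH-preserving collapse of the supercompact) is indeed the shape of the cited argument, and your explanation of why $\alpha=\omega$ is excluded (GCH alone yields special $\aleph_{n+1}$-Aronszajn trees, so no GCH-preserving extension can remove them) matches the paper. The gap is in your formulation of $P_\alpha$. You require the coded object only at \emph{cofinally many} cardinals below $\aleph_\alpha$ (``equivalently, on a tail'' --- these are not equivalent, and neither suffices). In the extension $V[G]$ of Theorem~\ref{FKT2}, GCH holds, and GCH gives a special $\kappa^+$-Aronszajn tree for every regular $\kappa$; since successors of successor cardinals are cofinal below every $\aleph_\alpha$, your $P_\alpha$ is simply \emph{true} in $V[G]$ for every limit $\alpha$, so your sentence would still have models of size $\aleph_\alpha$ there and no non-absoluteness follows. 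The only places where the collapse can destroy special Aronszajn trees while preserving GCH are successors of (countable) limit cardinals, first of all $\aleph_{\omega+1}$; and for a limit ordinal such as $\alpha=\omega\cdot 2$ these are neither cofinal in $\alpha$ nor do they fill a tail (the tail $[\omega+1,\omega\cdot 2)$ consists of successor ordinals only), so the ``on a tail'' reading fails as well. Your claim in task (ii) that the collapse kills the trees ``on a tail of cardinals below'' the supercompact is false for the same reason.

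What the cited proof actually does is demand the object at \emph{every} $\beta<\alpha$: the sentence $\psi^\alpha$ has predicates $Q_\beta$ ($\beta<\alpha$) with each $Q_{\beta+1}$ being $|Q_\beta|$-like, and it codes a special $|Q_{\beta+1}|$-Aronszajn tree whenever $|Q_{\beta+1}|=|Q_\beta|^+$; a model of size $\aleph_\alpha$ forces $|Q_\beta|=\aleph_\beta$ for all $\beta<\alpha$, so one recovers a special $\aleph_{\beta+1}$-Aronszajn tree for every $\beta<\alpha$ --- in particular at $\aleph_{\omega+1}$, which does not exist in $V[G]$, and this single failure already rules out $\aleph_\alpha$-sized models since $\alpha>\omega$. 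Note that no ``characterizing gadget'' pinning each piece to its exact cardinality is used (nor is such an absolute pinning available uniformly); the sizes are fixed only conditionally, by the $|Q_\beta|$-like chain together with the assumption that the whole model has size $\aleph_\alpha$. Finally, your menu of objects should not include Kurepa families here: the $M$ of Theorem~\ref{FKT2} is only assumed to satisfy GCH and weak square at singulars, which suffices for the special Aronszajn trees but not for Kurepa families (those come from the $\Diamond^+$ hypothesis of Theorem~\ref{FKT1}), so a sentence demanding them might already fail to have the required large model in $M$.
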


Theorem \ref{FKT2} is proved as follows:
Given $\alpha<\omega_1^M$, there is an $\lomegaone$-sentence $\varphi^{\alpha+1}$
which has a model of size $\alephs{\alpha+1}$ if and only if there is a \sat{\alp{\alpha}}.
Moreover, this equivalence is absolute between
transitive models of ZFC that contain $\varphi^{\alpha+1}$.
Therefore, $\varphi^{\alpha+1}$ has a model of size $\alephs{\alpha+1}$ in $M$.
Moreover, assuming a supercompact, there is a forcing extension $V[G]$
in which GCH is true, but there are no \sat{\alp{\alpha}}s, for all countable limit $\alpha$. 

Theorem \ref{FKT3} has a similar proof, but now for every limit $\omega<\alpha<\omega_1^M$, there exists some $\lomegaone$-sentence 
$\psi^\alpha$ that codes multiple special Aronszajn trees simultaneously.
The vocabulary of $\psi^\alpha$ contains predicates  $Q_\beta$, for all $\beta<\alpha$. Each 
$Q_{\beta+1}$ is $|Q_\beta|$-like and if $|Q_{\beta+1}|$=$|Q_\beta|^+$, then $\psi^\alpha$ codes a special 
$|Q_{\beta+1}|$-Aronszajn tree. 
It follows that $\psi^\alpha$ has models of size $\aleph_\alpha$
if and only if there are \sat{\alp{\beta}}s for all $\beta<\alpha$.
Moreover, this equivalence is absolute between
transitive models of ZFC that contain $\psi^\alpha$.
Thus, $\psi^\alpha$ has models of size $\aleph_\alpha$ in $M$
while in the generic extension $V[G]$ used in the proof of Theorem \ref{FKT2},
all models of $\psi^\alpha$ have size at most $\aleph_\omega$.

The above argument fails for $\alpha=\omega$ because
GCH implies a \sat{\alp{n}} for each $n<\omega$.
We overcome this barrier by using \emph{coherent} special $\kappa$-Aronszajn trees (see Definition 
\ref{pseudotree} for coherence).

The following is Corollary 3.15(a) in  \cite{KonigCoherence}. 
\begin{Thm}\label{Konig} If $\square_\lambda$ holds, then there is a \csat{\lambda^+}.  
\end{Thm}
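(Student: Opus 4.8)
I outline the proof from~\cite{KonigCoherence}, which runs through Todorcevic's method of minimal walks. First I would fix a $\square_\lambda$-sequence $\langle C_\alpha : \alpha<\lambda^+,\ \alpha\text{ a limit ordinal}\rangle$, so that each $C_\alpha$ is club in $\alpha$ with $\operatorname{otp}(C_\alpha)\le\lambda$, and $C_\beta=C_\alpha\cap\beta$ whenever $\beta$ is a limit point of $C_\alpha$. Walking down from $\beta$ to $\alpha$ along this $C$-sequence yields the characteristic $\rho\colon[\lambda^+]^2\to\lambda$, and the coherence clause $C_\beta=C_\alpha\cap\beta$ is exactly what makes $\rho$ \emph{subadditive}, i.e.\ $\rho(\alpha,\gamma)\le\max\{\rho(\alpha,\beta),\rho(\beta,\gamma)\}$ and $\rho(\alpha,\beta)\le\max\{\rho(\alpha,\gamma),\rho(\beta,\gamma)\}$ for all $\alpha<\beta<\gamma$, and \emph{thin}, i.e.\ $\lvert\{\xi<\beta:\rho(\xi,\beta)\le\nu\}\rvert\le\lvert\nu\rvert+\aleph_0<\lambda$ for all $\beta<\lambda^+$ and $\nu<\lambda$. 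Extracting these two properties from the $\square_\lambda$-sequence is the first task.

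Next, writing $e_\beta=\rho(\cdot,\beta)\colon\beta\to\lambda$, I would take $T=\{\,e_\beta\restrict\alpha : \alpha\le\beta<\lambda^+\,\}$ ordered by end-extension (so $s\sqsubseteq t$ iff $s=t\restrict\dom s$); this is Todorcevic's tree $T(\rho)$. From subadditivity, for $\alpha<\beta$ the functions $e_\alpha$ and $e_\beta\restrict\alpha$ agree at every $\xi<\alpha$ with $\rho(\xi,\alpha)>\rho(\alpha,\beta)$, so by thinness they differ on a set of size $\le\lvert\rho(\alpha,\beta)\rvert+\aleph_0<\lambda$. Hence $T$ is a tree of height $\lambda^+$; that each of its levels has size $\le\lambda$ is the standard fact that $T(\rho)$ is a $\lambda^+$-tree when $\rho$ comes from a $\square_\lambda$-sequence (part of what~\cite{KonigCoherence} establishes). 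Applying the same estimate to two nodes $e_\beta\restrict\alpha,\,e_\gamma\restrict\alpha$ of a common level shows they differ on fewer than $\lambda$ coordinates, which is the coherence demanded in Definition~\ref{pseudotree}. It then remains only to make $T$ special, after which $T$ is automatically Aronszajn, since a tree of height $\lambda^+$ with levels of size $\le\lambda$ that is the union of $\le\lambda$ antichains has no cofinal branch.

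Finally I would exhibit a specializing map $f\colon T\to\lambda$, i.e.\ one with $f(s)\ne f(t)$ whenever $s\sqsubseteq t$ and $s\ne t$. Following Todorcevic, to each $t\in T$ of height $\alpha$ one attaches a finite packet of walk data — a finite fragment of $t$ together with the $\rho$-values read off the walk from $\alpha$ — and subadditivity forces this packet to change strictly as one passes to a proper end-extension of $t$. I expect this to be the main obstacle: subadditivity and thinness reduce ``coherent'' and ``thin levels'' to bookkeeping, whereas verifying that a single $\lambda$-valued invariant is simultaneously injective along \emph{every} chain of $T$ is where the genuine combinatorics of walks is spent, and it is here that $\square_\lambda$ — rather than mere existence of a \at{\lambda^+} — is actually used. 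Assembling the three steps produces the desired \csat{\lambda^+}.
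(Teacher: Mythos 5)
The paper never proves this statement: Theorem~\ref{Konig} is imported verbatim as Corollary~3.15(a) of \cite{KonigCoherence}, so the only real test of your sketch is whether it delivers the tree in the sense the rest of the paper consumes it. It does not, and the most serious problem is the coherence you establish. In Definition~\ref{pseudotree}, $=^*$ means equality modulo a \emph{finite} set, and that is exactly what Lemma~\ref{nocantorsubtree} and clause (6) of $\phi_\alpha$ use: two nodes with the same domain may disagree at only finitely many points. Your tree $T(\rho)$, built from the subadditive, locally small characteristic $\rho$, only gives that $e_\alpha$ and $e_\beta\restrict\alpha$ disagree on a set of size $\le\lvert\rho(\alpha,\beta)\rvert+\aleph_0$; for $\lambda>\omega$ this is coherence modulo a set of size $<\lambda$, which is strictly weaker than mod-finite coherence. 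So your claim that differing on fewer than $\lambda$ coordinates ``is the coherence demanded in Definition~\ref{pseudotree}'' is false, and the tree your argument produces could not be used in Lemma~\ref{mainlemma}(2), where the mod-finite coherence of $\Upsilon^{(\beta)}$ is transferred to clause (6) and later exploited in Lemma~\ref{nocantorsubtree}. To get a genuinely mod-finite \csat{\lambda^+} one must work with a characteristic having finite coherence (for instance $\rho_1$, the maximal weight of the walk, or König's own construction), not with $\rho$, and the supporting lemmas are different from the ones you quote.

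The second gap is the specialization itself, which is the actual content of the theorem: you attach ``a finite packet of walk data'' to each node and say you \emph{expect} subadditivity to force it to change along chains, explicitly flagging this as the main obstacle. That is a statement of intent, not a proof; a complete argument must exhibit a concrete map $f\colon T\to\lambda$ injective on chains (equivalently a decomposition into $\lambda$ many antichains) and verify it from the square sequence, which is precisely where $\square_\lambda$ is spent and where the known proofs invoke specific walk lemmas you neither state nor prove. Smaller but real: the bound of $\lambda$ on the size of the levels is asserted as ``standard'' with no argument (it needs local smallness together with a bound on the values of $e_\beta$ on the disagreement set, or the finite-coherence route). As it stands, the proposal reproduces the easy bookkeeping and leaves out both the step that makes the tree coherent in the paper's sense and the step that makes it special.
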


So, in $L$, our modified formula $\phi_\alpha$
will have a model of size $\aleph_\alpha$.
On the other hand,
from a model of $\phi_\alpha$ of size $\geq\aleph_2$,
we can recover a coherent pseudotree
that contains a cofinal \sat{\aleph_2}. By coherence, this pseudotree cannot contain a copy of $2^{\leq\omega}$.
Todor\v cevi\'c showed that after Levy collapsing a Mahlo to $\omega_2$,
every \sat{\aleph_2} contains a copy of $2^{<\omega_1}$.
(We state his corresponding equiconsistency theorem below.)
Therefore, there is a model of GCH in which $\phi_\alpha$ has no models 
of size $\geq\aleph_2$.

\begin{Thm}[\cite{TodorcevicTrees}, Theorem 4.6]\label{Todorcevic} Con(ZFC+``there exists a Mahlo'')$\leftrightarrow$ 
Con(ZFC+GCH+``every special $\aleph_2$-Aronszajn tree contains a copy of $2^{<\omega_1}$'').
\end{Thm}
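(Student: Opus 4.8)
I would prove the two directions separately; the forward implication is the substantive one and goes by a Levy collapse. So assume the consistency of a Mahlo cardinal, fix $V\models\mathrm{ZFC}+\mathrm{GCH}$ with a Mahlo cardinal $\kappa$, put $\mathbb P=\mathrm{Coll}(\omega_1,{<}\kappa)$, and let $G$ be $\mathbb P$-generic over $V$; the model we build is $V[G]$. The cardinal arithmetic is routine: $\mathbb P$ is countably closed (so adds no reals and fixes $\omega_1$), has the $\kappa$-c.c.\ (using $\mathrm{GCH}$ and the inaccessibility of $\kappa$) and cardinality $\kappa$; hence it collapses each cardinal in $[\omega_1,\kappa)$ to $\omega_1$, preserves all cardinals $\ge\kappa$, and, together with ground-model $\mathrm{GCH}$, keeps $\mathrm{GCH}$ true in $V[G]$ (in particular $2^{\omega_1}=\kappa=\omega_2$, by nice-name counting). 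The content of the theorem is then to show that in $V[G]$ every \sat{\aleph_2} contains a copy of $2^{<\omega_1}$.

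To do this I would take, in $V[G]$, a \sat{\aleph_2} $T$ with specializing function $f\colon T\to\omega_1$ (so $f$ is injective on chains of $T$), prune in the standard way so that every node has extensions on cofinally many levels, and fix $\mathbb P$-names $\dot T,\dot f$ with the field of $\dot T$ forced to be $\kappa$. Using the factorization $\mathbb P\cong\mathrm{Coll}(\omega_1,{<}\mu)\ast\dot{\mathrm{Coll}}(\omega_1,[\mu,\kappa))$, whose second factor is countably closed, and the Mahloness of $\kappa$ (the inaccessibles below $\kappa$ are stationary; intersect that set with the club of ordinals closed under suitable Skolem functions of some $H_\theta^V\ni\dot T,\dot f,\mathbb P$), one obtains a stationary set $I\subseteq\kappa$ of inaccessibles $\mu$ such that $T\restriction\mu$ and $f\restriction\mu$ lie in $V[G_\mu]$, where $G_\mu=G\cap\mathrm{Coll}(\omega_1,{<}\mu)$, and $(T\restriction\mu,f\restriction\mu)$ is, in $V[G_\mu]$, a \sat{\aleph_2} with $\mu=\omega_2$; in particular $T\restriction\mu$ has no cofinal branch in $V[G_\mu]$, because $f$ would inject such a branch, of order type $\mu$, into $\omega_1$ while $|\mu|^{V[G_\mu]}=\aleph_2$. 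Then I would fix a strictly increasing continuous sequence $\langle\mu_\xi:\xi<\omega_1\rangle$ running through $I$ and its limit points (the reflection survives on a club, so $T\restriction\mu_\xi,f\restriction\mu_\xi\in V[G_{\mu_\xi}]$ is retained) and build, by recursion on $|s|$, a level-, order- and incomparability-preserving map $e\colon 2^{<\omega_1}\to T$ with $e(s)$ on level $\mu_{|s|}$. The successor step works: above $e(s)$ (on level $\mu_\xi$) the tree between levels $\mu_\xi$ and $\mu_{\xi+1}$ must contain $<_T$-incomparable nodes, since otherwise, by extendability, the unique branch of $T\restriction\mu_{\xi+1}$ through $e(s)$ would be definable in $V[G_{\mu_{\xi+1}}]$ from $e(s)$ and $T\restriction\mu_{\xi+1}$ and cofinal — impossible, as $T\restriction\mu_{\xi+1}$ is Aronszajn there; so one extends a splitting pair up to level $\mu_{\xi+1}$ and assigns the two one-step extensions of $s$ to the two resulting incomparable nodes. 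Because this splitting step is carried out cofinally often along $\omega_1$, this is exactly the point where a \emph{stationary} set of inaccessibles — genuine Mahloness rather than a single inaccessible — is used.

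The hard part will be the limit step: for a limit $\eta<\omega_1$ and $s\in 2^\eta$ one needs a node of $T$ on level $\mu_\eta$ above the chain $c_s=\bigcup_{\xi<\eta}e(s\restriction\xi)$, and an arbitrary such chain need not be capped at its supremum level — a branch of an Aronszajn tree can fail to be realized. Making the successor choices coherent enough that every $c_s$ does get capped — for instance by aiming the construction at nodes on level $\mu_\eta$ fixed in advance, or by carrying ``promises'' $e(s)\le_T p_s$ forward — is the delicate combinatorial core of the argument, and is where I expect the main effort. An alternative route is to invoke the Harrington--Shelah theorem that the Levy collapse of a Mahlo forces every stationary subset of $\{\alpha<\omega_2:\cf\alpha=\omega\}$ to reflect, together with a $\mathrm{ZFC}$ lemma extracting a non-reflecting stationary set from a \sat{\aleph_2} that contains no copy of $2^{<\omega_1}$; the same difficulty then reappears inside that lemma.

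For $(\Leftarrow)$, suppose $W$ is a model of $\mathrm{ZFC}+\mathrm{GCH}$ in which every \sat{\aleph_2} contains a copy of $2^{<\omega_1}$; the goal is to extract the consistency of a Mahlo cardinal, in two steps. First, $W\models\neg\square_{\omega_1}$: if $\square_{\omega_1}$ held, then Theorem~\ref{Konig} (with $\lambda=\omega_1$) would give a \csat{\aleph_2}, but a coherent tree has only countably many nodes on its $\omega$-th level, hence contains no copy of $2^{\le\omega}$ and a fortiori none of $2^{<\omega_1}$ — contrary to hypothesis. Second, the failure of $\square_{\omega_1}$ is equiconsistent with a Mahlo cardinal; for the lower bound one uses that, in the absence of an inner model with a Mahlo, the core model $K$ is ``$L$-like'' enough that the $\square_{\omega_1}$-sequence it carries (by Jensen's theorem) transfers up to a $\square_{\omega_1}$-sequence in $W$ — the critical points that could destroy coherence being controlled precisely because there is no stationary class of inaccessibles below the relevant cardinal — contradicting $W\models\neg\square_{\omega_1}$. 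Hence $W$ witnesses the consistency of $\mathrm{ZFC}$ with a Mahlo cardinal. (Equivalently, one may route the lower bound through stationary reflection at $\omega_2$ and cite the Harrington--Shelah equiconsistency directly.)
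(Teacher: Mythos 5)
First, note that the paper itself does not prove this statement: it is quoted verbatim from Todor\v cevi\'c (\cite{TodorcevicTrees}, Theorem 4.6), so the comparison is with his original argument, not with anything in this paper. Your forward direction is set up along the expected lines (Levy collapse $\mathrm{Coll}(\omega_1,{<}\kappa)$ of a Mahlo $\kappa$, preservation of GCH, reflection of a special tree $T$ and its specializing function to $V[G_\mu]$ for stationarily many inaccessible $\mu<\kappa$ via the $\kappa$-c.c.\ and name-restriction, and the splitting argument at successor stages). But you stop exactly at the mathematical core: the limit stage of the recursion building the embedding of $2^{<\omega_1}$. As you yourself say, a chain $c_s=\bigcup_{\xi<\eta}e(s\upharpoonright\xi)$ constructed through the levels $\mu_\xi$ need not have an upper bound in $T$, and you offer only the hope of ``promises'' or pre-aimed target nodes without showing how such commitments can be made and kept simultaneously for all $2^{<\eta}$ branches at each countable limit $\eta$. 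That capping problem is precisely the content of Todor\v cevi\'c's theorem (and, as you note, it reappears unchanged if one tries to route the argument through stationary reflection), so the proposal does not yet constitute a proof of the substantive direction; everything you have verified is the comparatively routine scaffolding around it.

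The converse also contains a flawed step, though a repairable one. You argue that a coherent tree ``has only countably many nodes on its $\omega$-th level, hence contains no copy of $2^{\le\omega}$.'' Neither half is correct as stated: a level of a coherent tree consists of functions pairwise equal modulo a finite set, and such a family can be uncountable when the functions take uncountably many values; more importantly, an order embedding of $2^{\le\omega}$ into $T$ need not send a level of $2^{\le\omega}$ into a single level of $T$, so counting one level of $T$ decides nothing. The correct argument is exactly Lemma \ref{nocantorsubtree} of this paper (diagonalize against a node of higher rank using coherence), applied after observing that any copy of $2^{<\omega_1}$ contains a copy of $2^{\le\omega}$. With that substitution your route is fine at the level of citation: $\square_{\omega_1}$ together with Theorem \ref{Konig} would give a coherent special $\aleph_2$-Aronszajn tree with no copy of $2^{<\omega_1}$, so the hypothesis forces $\neg\square_{\omega_1}$, and Jensen's theorem that failure of $\square_{\omega_1}$ makes $\omega_2$ Mahlo in $L$ gives the lower bound. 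The decisive gap remains the unhandled limit step in the forward direction.
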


Our result works not only for $\alpha=\omega$, but for all countable $\alpha$. For $\alpha$ a limit ordinal, or the 
successor to a limit ordinal, our result improves the large cardinal assumption from a supercompact (Theorems 
\ref{FKT2}, \ref{FKT3}) to a Mahlo cardinal (Theorem \ref{maintheorem}).

This completes all cases of the absoluteness question for  model-existence of $\lomegaone$-sentences. The same question can be 
asked about the amalgamation property of $\lomegaone$-sentences. Before we phrase the question precisely, notice that for the 
amalgamation property we need to specify the type of embeddings used. For this paper we consider only amalgamation under the 
substructure relation.  

\begin{Def}\label{apspec} Given a collection of models $\mathbf{K}$,
  by the \emph{amalgamation spectrum} of $\mathbf{K}$, in symbols AP-spec$(\bk)$,
  we mean the set of all cardinals $\kappa$ for which
  the class of all models in $\mathbf{K}$ of size $\kappa$
  is nonempty and has the amalgamation property.
  
  If $\bk$ is the collection of all models of some sentence $\varphi$, then we write AP-spec($\varphi)$ for AP-spec$(\bk)$.
\end{Def}

Then the absoluteness question for amalgamation is the following:
Is it the case that, if $V \subseteq W$ are 
transitive models of ZFC with the same ordinals, $\varphi\in V$ and
$V\models ``\varphi \text{ is an } \lomegaone\text{-sentence}$",
then $V \models ``\aleph_\alpha\in \text{AP-spec}(\varphi)"$
if and only if
$W \models ``\aleph_\alpha\in \text{AP-spec}(\varphi)"$?

Parallel to \cite{FriedmanEtAllNonAbsolutenessOfModelExistence}, we can show that manipulating the size of the continuum yields a 
non-absoluteness result for the amalgamation spectrum of $\lomegaone$-sentences.

Consider the sentence $\phi$ that asserts the 
existence of a full binary tree of length $\omega$. This sentence has models up to cardinality continuum. All models of $\phi$ 
differ only on the maximal branches they contain. In particular, they satisfy the amalgamation property in all cardinals up to 
the 
continuum. It follows that the $\kappa$-amalgamation property is not 
absolute for $\kappa\ge\aleph_2$. A similar result, but for $\aleph_2\le\kappa\le 2^{\aleph_1}$, is 
proved in \cite{SinapovaSKurepa} using Kurepa trees. The result is interesting mainly when GCH fails, since under GCH, 
$\aleph_2=2^{\aleph_1}$. In \cite{SinapovaSKurepa}, the question was raised about the absoluteness of $\kappa$-amalgamation, for
$\kappa\ge\aleph_3$, assuming GCH. In Section \ref{Sec:AP} we answer the question for all $\kappa=\aleph_\alpha$, $3\le 
\alpha<\omega$, and we prove that our examples cannot be used to settle the question for $\alpha\ge\omega$.

\section{Model- Existence}
In this section we use coherent special Aronszajn trees to prove Theorem \ref{maintheorem} about non-absoluteness of 
model-existence. 
Recall that well-orderings cannot be characterized by an
$\lomegaone$-sentence. So, it is unavoidable that we will be working with non-well-founded trees. We call such trees
\emph{pseudotrees} to distinguish them from their well-founded counterparts. 

\begin{Def}\label{pseudotree}~
  \begin{itemize}
  \item A \emph{pseudotree} is a partial ordered set
    $T$ such that each strict lower cone
    $\downarrow x=\{y\suchthat y<_T x\}$ is a chain.
  \item A pseudotree $T$ is \emph{functional}
    if there is a linear order $L$
    such that $T$ is a downward closed suborder of
    the class of all functions with domains
    of the form $\downarrow x=\{y\suchthat y<_L x\}$,
    ordered by inclusion. In this case,
    define a \emph{rank} $\rho\colon T\to L$
    by $\rho(t)$ being the unique element such that $\dom(t)={\downarrow\rho(t)}$.
  \item Given $T$ and $L$ as above and $x\in L$,
    define $T_x$ to be the fiber $\inv{\rho}(x)$.    
  \item The \emph{cofinality} $\cf(T)$
    of a functional pseudotree $T$
    is the cofinality $\cf(\rho[T])$.
  \item By $=^*$ we mean equality of sets
    modulo a finite set.
  \item A pseudotree $T$ is \emph{coherent} if
    it is functional and $\dom(s)=\dom(t)$ implies $s=^*t$.
  \item Given a regular uncountable cardinal $\kappa$,
    a \emph{$\kappa$-pseudotree} is a pseudotree $T$
    of cofinality $\kappa$ such that
    $\card{T_x}<\kappa$ for each $x\in\rho[T]$.
  \item A $\kappa^+$-pseudotree is \emph{special} if 
    it is the union of $\kappa$-many of its antichains.
 \end{itemize}
\end{Def}

\begin{Lem}\label{nocantorsubtree} If $T$ is a coherent pseudotree $T$ of uncountable cofinality,
  no suborder of $T$ is isomorphic to $(2^{\leq\omega},\subset)$.
\end{Lem}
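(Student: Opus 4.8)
The plan is to assume, toward a contradiction, that an order isomorphism $f$ carries $(2^{\leq\omega},\subset)$ onto a suborder of $T$, and to extract a branch $y\in 2^{\omega}$ whose image $f(y)$ disagrees with a fixed node on an infinite set — which coherence forbids. Fix a linear order $L$ witnessing that $T$ is functional, write ${\downarrow}x=\{z\suchthat z<_L x\}$, and for $\sigma\in 2^{\leq\omega}$ put $\beta_\sigma=\rho(f(\sigma))$, so $\dom f(\sigma)={\downarrow}\beta_\sigma$. Since $f$ is an order isomorphism, $\sigma\trileq\tau$ gives $f(\sigma)=f(\tau)\restrict{\downarrow}\beta_\sigma$ together with $\beta_\sigma\le_L\beta_\tau$ (strictly unless $\sigma=\tau$), and $\sigma$ incomparable with $\tau$ gives $f(\sigma)$ incomparable with $f(\tau)$.

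First I would pin down a reference node lying above every finite level of the copy. The set $\{\beta_\sigma\suchthat\sigma\in 2^{<\omega}\}$ is a countable subset of $\rho[T]$, so, because $\cf(T)=\cf(\rho[T])$ is uncountable, it is not cofinal in $\rho[T]$: there is $\nu\in\rho[T]$ with $\beta_\sigma<_L\nu$ for all finite $\sigma$ (necessarily strictly, since these $\beta_\sigma$ have no largest element, as $\beta_\sigma<_L\beta_{\sigma\,{}^\frown 0}$). Fix $b\in T$ with $\rho(b)=\nu$; then ${\downarrow}\beta_\sigma\subsetneq{\downarrow}\nu=\dom b$ for every finite $\sigma$. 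Since $T$ is downward closed, $b\restrict{\downarrow}\beta_\sigma\in T$; it has the same domain as $f(\sigma)$, so coherence gives $f(\sigma)=^{*}b\restrict{\downarrow}\beta_\sigma$, whence
\[
  D(\sigma):=\{\,\xi\in{\downarrow}\beta_\sigma\suchthat f(\sigma)(\xi)\neq b(\xi)\,\}
\]
is finite for each finite $\sigma$; and from $f(\sigma)=f(\tau)\restrict{\downarrow}\beta_\sigma$ one reads off monotonicity, $\sigma\trileq\tau\Rightarrow D(\sigma)\subseteq D(\tau)$.

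The heart of the argument is that at every finite $\sigma$ some child strictly enlarges $D$, i.e. $D(\sigma)\subsetneq D(\sigma\,{}^\frown 0)$ or $D(\sigma)\subsetneq D(\sigma\,{}^\frown 1)$. Indeed $f(\sigma\,{}^\frown 0)$ and $f(\sigma\,{}^\frown 1)$ are incomparable functions properly extending $f(\sigma)$; assuming $\beta_{\sigma\,{}^\frown 0}\le_L\beta_{\sigma\,{}^\frown 1}$, the equation $f(\sigma\,{}^\frown 0)=f(\sigma\,{}^\frown 1)\restrict{\downarrow}\beta_{\sigma\,{}^\frown 0}$ would make them comparable, so they disagree at some $\xi\in{\downarrow}\beta_{\sigma\,{}^\frown 0}\setminus{\downarrow}\beta_\sigma$ (they agree on ${\downarrow}\beta_\sigma$, being extensions of $f(\sigma)$). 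As $f(\sigma\,{}^\frown 0)(\xi)\neq f(\sigma\,{}^\frown 1)(\xi)$, at most one of these values equals $b(\xi)$, so $\xi\in D(\sigma\,{}^\frown i)$ for some $i\in\{0,1\}$; and $\xi\notin{\downarrow}\beta_\sigma\supseteq D(\sigma)$, which proves the claim. Now build $y\in 2^{\omega}$ recursively, at each step passing to a child that strictly enlarges $D$; then $D(y\restrict 0)\subsetneq D(y\restrict 1)\subsetneq\cdots$, so $W:=\bigcup_{m}D(y\restrict m)$ is infinite.

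Finally I would close the argument. Each $\xi\in D(y\restrict m)$ lies in ${\downarrow}\beta_{y\restrict m}$, which is contained in both ${\downarrow}\beta_y=\dom f(y)$ and ${\downarrow}\nu=\dom b$; moreover $f(y)(\xi)=f(y\restrict m)(\xi)\neq b(\xi)$, using $f(y\restrict m)=f(y)\restrict{\downarrow}\beta_{y\restrict m}$. Hence $W\subseteq\{\,\xi\in{\downarrow}\beta_y\cap{\downarrow}\nu\suchthat f(y)(\xi)\neq b(\xi)\,\}$. But ${\downarrow}\beta_y\cap{\downarrow}\nu={\downarrow}\mu$ with $\mu:=\min_L(\beta_y,\nu)$, and $f(y)\restrict{\downarrow}\mu$ and $b\restrict{\downarrow}\mu$ both lie in $T$ (downward closure) and share the domain ${\downarrow}\mu$, so coherence makes the displayed set finite, contradicting the infinitude of $W$. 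The step demanding the most care is precisely this last one: the reference node $b$ was built to dominate all finite levels but need not dominate the limit node $f(y)$, so the coherence clash must be taken on the common initial segment ${\downarrow}\mu$ rather than on $\dom f(y)$ or $\dom b$; what makes it go through is that every point of $W$ was supplied by a finite level and therefore already sits below $\nu$. This is also the only use of the hypothesis: uncountable cofinality of $T$ is exactly what guarantees the existence of a node $b$ above the countably many ranks $\beta_\sigma$, $\sigma\in 2^{<\omega}$.
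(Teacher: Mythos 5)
Your proof is correct and follows essentially the same route as the paper's: use uncountable cofinality to fix a reference node ($b$, the paper's $t$) whose rank lies above all ranks $\rho(f(\sigma))$ for finite $\sigma$, then diagonalize through $2^{<\omega}$ by always passing to a child that disagrees with $b$ at a new point, producing infinitely many disagreements that violate coherence. Your extra care at the end --- restricting $f(y)$ and $b$ to the common domain ${\downarrow}\mu$ before invoking coherence --- is exactly the step the paper leaves implicit, and it is handled correctly.
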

\begin{proof}
  Seeking a contradiction,
  suppose $e\colon2^{\leq\omega}\to T$ is an order embedding.
  Choose $t\in T$ such that $\rho(t)\geq\rho(e(c))$ for all $c\in 2^{<\omega}$.
  This is possible since $T$ has uncountable cofinality. 
  Then construct $w\in 2^\omega$ as follows.
  Given $c=w\restrict n$,
  since $e(c^\frown 0)\perp e(c^\frown 1)$,
  we may choose $w(n)=i<2$ such that
  $e(c^\frown i)(y)\not=t(y)$ for some $y\geq\rho(e(c))$. 
  Thus, $e(w)(y)\not=t(y)$ for infinitely many $y$,
  in contradiction with coherence of $T$.
\end{proof}

\begin{Lem}\label{mainlemma}
  Given $1\leq\alpha<\omega_1$, there is an $\lomegaone$
  formula $\phi_\alpha$ satisfying the following.
  \begin{enumerate}
  \item If $\phi_\alpha$ has a model $\fA$ of size $\geq\aleph_2$,
    then there is a coherent pseudotree $T$
    with cofinality $\omega_2$ and an
    order embedding of a \sat{\aleph_2}\ into $T$.
  \item If there is a \csat{\alp{\beta}} for each $\beta<\alpha$,
    then $\phi_\alpha$ has a model $\fB$ of size $\aleph_\alpha$.
    \item There is no model of $\phi_\alpha$ of size greater than $\aleph_\alpha$. 
  \end{enumerate}
\end{Lem}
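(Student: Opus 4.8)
The plan is to construct $\phi_\alpha$ explicitly --- essentially the Aronszajn-tree-coding sentences of \cite{\fried}, strengthened by a coherence clause --- and then verify (1)--(3) in turn. I would use a countable vocabulary with: a binary relation $<$ and unary predicates $Q_0\subseteq Q_1\subseteq\cdots\subseteq Q_\alpha$ naming a continuous increasing chain of initial segments of a linear order $Q=Q_\alpha$; a unary predicate $T$ and a binary relation $\prec$ for a pseudotree; a rank function $\rho\colon T\to Q$; for each $\beta<\alpha$, function symbols coding an injection of each set $\{y:y<x\}$, $x\in Q_{\beta+1}$, into $Q_\beta$ and a function $T\to Q_\beta$ injective on every fibre over $Q_{\beta+1}$; a ternary relation $\mathrm{App}$, with $\mathrm{App}(t,q,i)$ read as ``$t(q)=i$'' for $q<\rho(t)$ and $i$ in a fixed two-element predicate, so that each $t\in T$ is literally a function on $\{q:q<\rho(t)\}$ (this makes the pseudotree functional); and a ``specialization'' function $c\colon T\to Q$. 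Then $\phi_\alpha$ is the conjunction of first-order axioms saying that $\prec$ is a functional pseudotree with $\rho$ onto; that the $Q_\beta$ form a continuous chain with $Q_\beta$ a proper initial segment of $Q_{\beta+1}$ and $Q_0$ having only finite proper initial segments; that the injection symbols behave as described; that $c(s)\neq c(t)$ whenever $s\prec t$ and $c[\{t:\rho(t)\in Q_{\beta+1}\}]\subseteq Q_\beta$ for each $\beta<\alpha$; together with one infinitary conjunct expressing coherence --- whenever $\rho(s)=\rho(t)$, the set of $q$ on which $s$ and $t$ disagree is finite, written as the countable disjunction over $n<\omega$ of ``$s$ and $t$ disagree at $\leq n$ points.'' Every conjunct is a countable Boolean combination of first-order formulas with finite quantifier prefixes, so $\phi_\alpha\in\lomegaone$; the coherence conjunct is the new ingredient absent from \cite{\fried}.

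Clause (3) follows from the claim, proved by induction on $\gamma\leq\alpha$, that $|Q_\gamma|\leq\aleph_\gamma$: a linear order each of whose elements has finitely many predecessors embeds into $\omega$, so $|Q_0|\leq\aleph_0$; a linear order each of whose elements has at most $\kappa$ predecessors has size $\leq\kappa^+$, so $|Q_\gamma|\leq\aleph_\gamma$ gives $|Q_{\gamma+1}|\leq\aleph_{\gamma+1}$; and continuity handles limit $\gamma$. As $\rho$ is onto and each fibre over $Q_{\gamma+1}$ injects into $Q_\gamma$, hence has size $\leq\aleph_\gamma$, we get $|T|\leq|Q_\alpha|\cdot\sup_{\gamma<\alpha}|Q_\gamma|\leq\aleph_\alpha$; since the universe of a model of $\phi_\alpha$ is $T\cup Q$ up to the two-element predicate, every model has size $\leq\aleph_\alpha$.

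For clause (1), let $\fA\models\phi_\alpha$ with $|\fA|\geq\aleph_2$. By the previous paragraph $|\fA|=|T|$, so $|Q_\alpha|\geq\aleph_2$, and we may let $\gamma$ be least with $|Q_\gamma|\geq\aleph_2$. Continuity makes $\gamma$ a successor $\delta+1$; minimality gives $|Q_\delta|\leq\aleph_1$; and since each element of $Q_\gamma$ has $\leq|Q_\delta|$ predecessors, $|Q_\gamma|\leq|Q_\delta|^+\leq\aleph_2$, hence $|Q_\gamma|=\aleph_2$, and moreover $\cf(Q_\gamma)=\omega_2$ (a cofinal subset of size $\leq\aleph_1$ would make $Q_\gamma$ a union of $\leq\aleph_1$ sets of $\leq|Q_\delta|\leq\aleph_1$ predecessors, forcing $|Q_\gamma|\leq\aleph_1$). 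Put $T'=\{t\in T:\rho(t)\in Q_\gamma\}$. Since $Q_\gamma$ is an initial segment of $Q$, $T'$ is downward closed under $\prec$, hence a functional pseudotree; $\rho$ restricted to $T'$ is onto $Q_\gamma$, so $\cf(T')=\cf(Q_\gamma)=\omega_2$; every fibre of $T'$ has size $\leq|Q_\delta|<\aleph_2$; $T'$ is coherent because the coherence conjunct of $\phi_\alpha$ holds of all pairs in $T$; and $c$ restricted to $T'$ satisfies $c(s)\neq c(t)$ for $s\prec t$ and takes values in $Q_\delta$, so $\{c^{-1}(v)\cap T':v\in Q_\delta\}$ is a partition of $T'$ into $\leq|Q_\delta|\leq\aleph_1$ antichains. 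Hence $T'$ is a coherent special $\aleph_2$-pseudotree in the sense of Definition~\ref{pseudotree}: it serves as the coherent pseudotree of cofinality $\omega_2$, and the inclusion $T'\hookrightarrow T$ as the order embedding of a \sat{\aleph_2}. (For $\alpha=1$ clause (1) is vacuous, since then $\phi_\alpha$ has no model of size $\geq\aleph_2$.)

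For clause (2), suppose there is a \csat{\alp{\beta}}, say $S_\beta$, for each $\beta<\alpha$; we may take the rank order of $S_\beta$ to be the ordinal $\alp{\beta}$. Build $\fB$ by letting $Q_0$ be $\omega$, the block $Q_{\beta+1}\setminus Q_\beta$ the rank order of $S_\beta$, and $T$ the disjoint union of a chain $C$ with $\rho[C]=Q_0$ and the pseudotrees $S_\beta$ placed over their blocks; then $|Q_\gamma|=\aleph_\gamma$ for each $\gamma\leq\alpha$ and $|\fB|=\aleph_0+\sum_{\beta<\alpha}\alp{\beta}=\aleph_\alpha$. A disjoint union of functional pseudotrees is a functional pseudotree with $\rho$ onto; two elements of equal rank lie in the same summand, so coherence of $T$ reduces to the trivial coherence of $C$ and the coherence of each $S_\beta$; the specialization functions of the $S_\beta$, valued in sets of size $\aleph_\beta=|Q_\beta|$, together with an injection of $C$ into $Q_0$, assemble into a $c$ with $c[S_\beta]\subseteq Q_\beta$, $c[C]\subseteq Q_0$, and $c(s)\neq c(t)$ for $s\prec t$ (there being no comparable cross-summand pairs); each element of $Q_{\beta+1}$ has $\leq\aleph_\beta$ predecessors and each fibre over $Q_{\beta+1}$ has size $\leq\aleph_\beta=|Q_\beta|$, which supplies the injection symbols. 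So $\fB\models\phi_\alpha$ and $|\fB|=\aleph_\alpha$. The one real obstacle is the design of $\phi_\alpha$: the coherence conjunct must be an honest $\lomegaone$ formula yet strong enough that a model of size $\geq\aleph_2$ yields a genuinely coherent pseudotree (so that Lemma~\ref{nocantorsubtree} applies in the application to Theorem~\ref{maintheorem}) and weak enough that the stacked trees of clause (2) satisfy it; keeping the ``$|Q_\beta|$-likeness'' clauses, the fibre-size clauses, and the range condition on $c$ mutually consistent for all $\beta\leq\alpha$ is where the bookkeeping lies.
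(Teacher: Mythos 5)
Your design differs from the paper's in one crucial respect, and the difference is fatal rather than cosmetic: you code a \emph{single} pseudotree $T$ of ``height'' all of $Q=Q_\alpha$, with one specialization function $c$ required to map every node of rank in $Q_{\beta+1}$ into $Q_\beta$ and to separate $\prec$-comparable nodes. If, as your verification of (1) needs, ``functional pseudotree'' is taken in the sense of Definition~\ref{pseudotree} (so $T$ is closed under restriction to initial segments) and $\rho$ is onto, then for $\alpha\geq 2$ your axioms admit \emph{no} model of size $\geq\aleph_2$: pick any $x\in Q\setminus Q_{\beta+1}$ and any $t$ with $\rho(t)=x$; the restrictions $t\restrict{\downarrow w}$ for $w\in Q_{\beta+1}$ form a chain of pairwise distinct, comparable nodes of ranks in $Q_{\beta+1}$, on which $c$ is injective with values in $Q_\beta$, forcing $\card{Q_{\beta+1}}\leq\card{Q_\beta}$ whenever $\beta+1<\alpha$; inductively all $Q_\gamma$ ($\gamma<\alpha$) are countable and the model has size at most $\aleph_1$, so clause (2) fails. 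Your construction for (2) tacitly acknowledges this by taking a \emph{disjoint} union of the trees placed over their blocks with ``no comparable cross-summand pairs''---but that structure is not downward closed (the restrictions of a block-$\beta$ node to ranks in lower blocks are absent), so it is not a functional pseudotree and does not satisfy the sentence under the first reading. If instead you drop downward closure to rescue (2), then (1) breaks: a model of size $\geq\aleph_2$ no longer yields any tree structure at all (e.g.\ one can satisfy the remaining axioms with all fibres singletons and pairwise incomparable, a model of size $\aleph_\alpha$ in ZFC), and such degenerate models would survive in the Todor\v cevi\'c model, destroying the intended application in Theorem~\ref{maintheorem}(2). This tension is exactly why the paper does \emph{not} stack: it uses a separate pseudotree $T_\beta$ for each $\beta<\alpha$, each with ranks only in $\omega_{\beta+1}$, levels enumerated by $\omega_\beta$, its own downward-closure clause, and its own specializing function $S_\beta$ into $\omega_\beta$; specialness is then only ever demanded of a tree of height $\omega_{\beta+1}$ with $\omega_\beta$ colors, which is consistent.

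A secondary gap: even granting your extraction of the coherent special pseudotree $T'$ in (1), you stop there and declare the inclusion $T'\hookrightarrow T$ to be ``the order embedding of a \sat{\aleph_2}.'' But $T'$ is leveled by the linear order $Q_\gamma$, which need not be well ordered (and cannot be forced to be by an $\lomegaone$-sentence), so $T'$ is not an Aronszajn \emph{tree}, and Theorem~\ref{Todorcevic} applies only to genuine trees. The paper's proof adds the missing step: using $\cf=\omega_2$, choose $W\subseteq\omega_{\beta+1}^{\fA}$ with $(W,<)\cong(\omega_2,\in)$ and restrict to ranks in $W$; downward closure then makes this restriction $U$ an $\omega_2$-tree with levels of size $\leq\aleph_1$, specialized by $S_\beta$ (hence with no cofinal branch), i.e.\ an honest \sat{\aleph_2}\ embedded in the coherent pseudotree. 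You have all the ingredients ($\cf(Q_\gamma)=\omega_2$, small fibres, the coloring), but the well-founded tree must actually be produced; as written, clause (1) is not established.
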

\begin{proof}
  Let $1\leq\alpha<\omega_1$.
  We will use a predicate symbol $\omega_\beta$ for each $\beta\le\alpha$,
  a binary relation symbol $<$,
  ternary relation symbols $L_\beta$ and $S_\beta$ for each $\beta<\alpha$,
  and a $4$-ary relation symbol $T_\beta$ for each $\beta<\alpha$.
  Our sentence $\phi_\alpha$ will assert that the predicates
  $L_\beta$, $T_\beta$, and $S_\beta$ are functional, {\it i.e.},
  each of these predicates defines the graph of a function.
  Therefore, we will freely write, for example,
  $z=L_\beta(x,y)$ to denote the unique $z$ such that $L_\beta(x,y,z)$.
  Further, $L_\beta(x,\bullet)$ will denote the function
  sending $y$ to $L_\beta(x,y)$.
  
  The idea behind the definition below is that for each $\beta<\alpha$,
  the relation $T_\beta$ defines a functional pseudotree
  with underlying order $\omega_{\beta+1}$.
  $T_\beta(x,\bullet,\bullet)$ will enumerate
  the set of all functions in the pseudotree with rank equal to $x$.
  Each such function will equal $T_\beta(x,y,\bullet)$,
  for some $y\in \omega_\beta$.
  So, each level of the pseudotree will have size at most $|\omega_\beta|$.
  Also, $\dom(T_\beta(x,y,\bullet))={\downarrow x}$.
  We will use $S_\beta$ to witness the fact that $T_\beta$ is special. 

  Let $\phi_\alpha\in\lomegaone$ assert the following statements
  for each $\beta<\alpha$, $x\in\omega_{\beta+1}$, and $y\in\omega_\beta$.
  
  \begin{enumerate}
  \item The universe is a continuously increasing union
    $\bigcup_{\beta\le\alpha}\omega_\beta$
    and strictly linearly ordered by $<$.
  \item $\omega_0$ is infinite yet
    $\downarrow n=\{m\suchthat m<n\}$ is finite
    for all $n\in\omega_0$.
  \item $\omega_\beta$ is $<$-downward closed.
  \item $L_\beta$, $T_\beta$ and $S_\beta$ are functional predicates.
    
  \item\label{omegabetalike} If $\downarrow x$ is not empty,
    then $L_\beta(x,\bullet)$ is a surjection from
    $\omega_\beta$ to $\downarrow x$.
    This will ensure that each $\omega_{\beta+1}$ is $|\omega_\beta|$-like.
  \item $\dom(T_\beta(x,y,\bullet))={\downarrow x}$ and
    $T_\beta(x,y,\bullet)=^*T_\beta(x,z,\bullet)$, for each 
    $z\in\omega_\beta$, as required for coherence.
  \item For each $w<x$, there exists $z\in\omega_\beta$
    such that $T_\beta(w,z,\bullet)\subset T_\beta(x,y,\bullet)$,
    {\it i.e.}, $T_\beta$ is downward closed.
  \item $S_\beta: \omega_{\beta+1}\times\omega_\beta\to\omega_\beta$. 
  \item For each $w<x$ and $z\in\omega_\beta$,
    if $T_\beta(w,z,\bullet)\subset T_\beta(x,y,\bullet)$,
    then $S_\beta(w,z)\not=S_\beta(x,y)$.
  \end{enumerate}

  Assuming there exists a model $\fA$ of $\phi_\alpha$
  of size $\geq\aleph_2$,
  there is a unique $\beta<\alpha$ such that
  $\card{\omega_\beta^{\fA}}=\aleph_1$ and
  $\cf(\omega_{\beta+1}^{\fA})=\omega_2$.
  Why? First, by regularity of $\aleph_2$,
  the least $\gamma\leq\alpha$
  such that $\card{\omega_\gamma^{\fA}}\geq\aleph_2$
  must be a successor ordinal $\beta+1$.
  Second, by~\eqref{omegabetalike},
  $\card{I}\leq\card{\omega_\beta^{\fA}}$ 
  for every proper initial segment $I$ of $\omega_{\beta+1}^{\fA}$.
  Therefore, $\card{\omega_\beta^{\fA}}=\aleph_1$,
  $\card{\omega_{\beta+1}^{\fA}}=\aleph_2$,
  and $\omega_{\beta+1}^{\fA}$ cannot be covered
  by $\aleph_1$-many proper initial segments.
  
  Select $W\subset\omega_{\beta+1}^{\fA}$
  such that $(W,<)\cong (\omega_2,\in)$, 
  and define trees $T$ and $U$ as follows:
  \begin{align*}
    T&=\left(\left\{T^{\fA}_\beta(x,y,\bullet)\suchthat
    (x,y)\in \omega_{\beta+1}^{\fA}\times\omega_\beta^{\fA}\right\}
    ,\subset\right)\\
    U&=\left(\left\{T^{\fA}_\beta(x,y,\bullet)\suchthat
    (x,y)\in W\times\omega_\beta^{\fA}\right\},\subset\right)
  \end{align*}
  Then $T$ is a coherent pseudotree of cofinality $\omega_2$,
  $U$ is an $\omega_2$-tree and suborder of $T$,
  and $S_\beta^{\fA}$ witnesses that $U$ is a \sat{\aleph_2}.
  This proves part (1).
  
  For part (2), assuming the existence of
  a \csat{\alp{\beta}} $\Upsilon^{(\beta)}$
  for each $\beta<\alpha$, let us construct a model
  $\fB$ of $\phi_\alpha$ with size $\aleph_\alpha$.
  Without loss of generality, each $\Upsilon^{(\beta)}$
  is a downward closed suborder of
  $((\omega_{\beta+1})^{<\omega_{\beta+1}},\subset)$.
  For each $\beta<\alpha$, let
  $\Xi_\beta\colon\Upsilon^{(\beta)}\to\omega_\beta$ witness specialness.
  Let $\fB$ have universe $\omega_\alpha$
  with ${<^{\fB}}={\in}$ and $\omega_\beta^{\fB}=\omega_\beta$.
  For each $\beta<\alpha$ and $\gamma<\omega_{\beta+1}$:
  \begin{enumerate}
  \item If $\gamma\not=0$, choose a surjection
    $L_\beta(\gamma,\bullet)\colon\omega_\beta\to\gamma$ and let 
    $L_\beta^{\fB}(\gamma,\bullet)=L_\beta(\gamma,\bullet)$.
  \item Choose a surjection
    \(\Lambda_\beta(\gamma,\bullet)\colon\omega_\beta
    \to\Upsilon^{(\beta)}_\gamma\), where $\Upsilon^{(\beta)}_\gamma$
    is the $\gamma^{th}$ level of $\Upsilon^{(\beta)}$, and 
    let \(T^{\fB}_\beta(\gamma,\delta,\bullet)
    =\Lambda_\beta(\gamma,\delta)\) for each $\delta<\omega_\beta$.
  \item Let \(S^{\fB}_\beta(\gamma,\bullet)
    =\Xi_\beta(\Lambda_\beta(\gamma,\bullet))\).
  \end{enumerate}
  It is immediate that $\fB$ is a model of $\phi_\alpha$
  and $\fB$ has size $\aleph_\alpha$, which proves (2). We finish the proof by noticing that (3) follows directly from the 
definition. 
\end{proof}

Notice that $\phi_\alpha$ is an incomplete sentence.
\begin{Thm}\label{maintheorem}
  For each $2\le \alpha<\omega_1$, let $\phi_\alpha$ be the sentence from Lemma \ref{mainlemma}. 
  \begin{enumerate}
  \item Given $2\le \alpha<\omega_1$, if $\square_{\aleph_\beta}$ holds for all $\beta<\alpha$, then $\phi_\alpha$ has a model of size $\aleph_\alpha$.
    In particular, it is consistent with ZFC+GCH that,
    for all $2\le \alpha<\omega_1$,
    $\phi_\alpha$ has a model of size $\aleph_\alpha$.
\item It is consistent, relative to the existence of a Mahlo cardinal,
  that there is a model of ZFC+GCH in which,
  for each $2\le \alpha<\omega_1$,
  all models of $\phi_\alpha$ have size at most $\aleph_1$.
  \end{enumerate}
\end{Thm}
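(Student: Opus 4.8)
\section*{Proof proposal}

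The plan is to derive Theorem~\ref{maintheorem} by combining Lemma~\ref{mainlemma} with the cited theorems of K\"onig (Theorem~\ref{Konig}) and Todor\v cevi\'c (Theorem~\ref{Todorcevic}); the substantive content is already packaged in those statements, so what remains is mostly to connect them.

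For part~(1), I would fix $2\le\alpha<\omega_1$, assume $\square_{\aleph_\beta}$ for every $\beta<\alpha$, and observe that by Lemma~\ref{mainlemma}(2) it is enough to produce a \csat{\alp{\beta}}\ for each $\beta<\alpha$. For $1\le\beta<\alpha$ this is exactly Theorem~\ref{Konig} applied with $\lambda=\aleph_\beta$. The instance $\beta=0$ deserves a separate word: a $\square_\omega$-sequence consists of sets of order type $\omega$, which have no limit points, so $\square_{\aleph_0}$ holds vacuously in ZFC, and Theorem~\ref{Konig} with $\lambda=\omega$ (equivalently, the classical coherent special Aronszajn tree on $\omega_1$) supplies the required \csat{\aleph_1}. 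Hence all the trees exist and $\phi_\alpha$ has a model of size $\aleph_\alpha$. For the ``in particular'' clause I would work in $L$: there $\square_\kappa$ holds at every infinite cardinal $\kappa$ and GCH holds, so the hypothesis just verified is met for every $\alpha<\omega_1$ simultaneously.

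For part~(2), I would start from the consistency of a Mahlo cardinal and use the forward direction of Theorem~\ref{Todorcevic} to fix a model $N$ of ZFC + GCH in which every \sat{\aleph_2}\ contains a copy of $2^{<\omega_1}$. Working in $N$, suppose for contradiction that $\phi_\alpha$ has a model of size $\ge\aleph_2$ for some $2\le\alpha<\omega_1$. By Lemma~\ref{mainlemma}(1) there is a coherent pseudotree $T$ of cofinality $\omega_2$ and an order embedding $e$ of a \sat{\aleph_2}\ $U$ into $T$. By the defining property of $N$, $U$ has a suborder isomorphic to $(2^{<\omega_1},\subset)$; restricting to sequences of length $\le\omega$ (a downward closed suborder of $2^{<\omega_1}$) yields a suborder of $U$ isomorphic to $(2^{\le\omega},\subset)$, and $e$ carries it into $T$. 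Since $T$ is coherent of uncountable cofinality, this contradicts Lemma~\ref{nocantorsubtree}. Therefore in $N$ no $\phi_\alpha$ with $2\le\alpha<\omega_1$ has a model of size $\ge\aleph_2$; equivalently, all models of each such $\phi_\alpha$ have size at most $\aleph_1$.

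Almost everything here is a short chain of implications, so I do not expect a single hard step once Lemma~\ref{mainlemma} is in hand. The points that need a little care are minor: handling the $\beta=0$ case of part~(1) (which I would route through the degenerate $\square_\omega$, or else cite the classical coherent special Aronszajn tree on $\omega_1$ directly), confirming that the tree $U$ extracted in Lemma~\ref{mainlemma}(1) is a \sat{\aleph_2}\ in precisely the sense to which Todor\v cevi\'c's dichotomy applies (this is already verified inside Lemma~\ref{mainlemma}), and recording the trivial observation that a copy of $2^{<\omega_1}$ contains a copy of $2^{\le\omega}$.
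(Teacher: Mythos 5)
Your proposal is correct and follows exactly the paper's route: part (1) is Lemma~\ref{mainlemma}(2) plus Theorem~\ref{Konig} (with $L$ giving the GCH consistency), and part (2) is Lemma~\ref{mainlemma}(1) plus Lemma~\ref{nocantorsubtree} plus Theorem~\ref{Todorcevic}. The paper's proof is just this citation chain; your extra care with the $\beta=0$ case and with extracting a copy of $2^{\le\omega}$ from $2^{<\omega_1}$ only spells out details the paper leaves implicit.
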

\begin{proof} (1) follows by Lemma \ref{mainlemma}, part (2), and Theorem \ref{Konig}. (2) follows from Lemma \ref{mainlemma}, 
part (1), Lemma \ref{nocantorsubtree} and Theorem \ref{Todorcevic}. 
\end{proof}

\section{Amalgamation}\label{Sec:AP}
In this section we consider the absoluteness question for the amalgamation spectra of $\lomegaone$-sentences. In particular, we 
investigate the amalgamation spectra of the sentence $\phi_\alpha$ from Lemma 
\ref{mainlemma} under the \emph{substructure} relation. We fix some notation first.

\begin{Def} For each $1\le \alpha<\omega_1$, let $(\bkalpha,\subset)$ be the collection of all models of $\phi_\alpha$ from 
Theorem \ref{maintheorem} equipped with the substructure relation. 
\end{Def}

\begin{Rem}
  $(\bkalpha,\subset)$ is not quite an abstract elementary class
  because $\phi_\alpha$ is not preserved by arbitrary unions of chains. In particular, parts (6) and (7) of the definition of 
$\phi_\alpha$ are not preserved by arbitrary unions. 
    However, this can be remedied by adding Skolem functions for
  parts (6) and (7). That is, for (6) introduce countably many new predicate symbols $(C^\beta_n)$, each $C^\beta_n$ of arity 
$n+3$, and require that $C^\beta_n(x,y,z,\vec{w})$ holds true if and only if $\vec{w}$ is the vector of all elements $w$ such 
that $T_\beta(x,y,w)$ is different than $T_\beta(x,z,w)$. By coherence there are only finitely many such $w$'s. For (7), 
introduce a new $4$-ary predicate symbol $P$ and require that $P(x,y,w,z)$ holds true if and only if $w<x$ and $z$ is such 
that $T_\beta(w,z,\bullet)\subset T_\beta(x,y,\bullet)$. 
Our results hold true even after such a change.
\end{Rem}

We prove that if $\alpha$ is finite, then $\bkalpha$ fails amalgamation in all cardinalities below 
$\aleph_\alpha$ (Lemma \ref{failap}), but amalgamation in $\aleph_\alpha$  holds trivially because all models of that size, if 
any,  are maximal (Lemma \ref{maximal}). By 
Theorem \ref{maintheorem}, for $\alpha\ge 2$ it is independent of ZFC+GCH
whether there are any models in $\bkalpha$ of size $\aleph_\alpha$.
We conclude that it is consistent with ZFC+GCH that
the amalgamation spectrum of $\bkalpha$ for $\alpha\geq 2$
is consistently empty and consistently equal
to $\{\aleph_\alpha\}$ (Theorem \ref{apempty}).

\begin{Lem}\label{endext}
  For all $\beta<\alpha<\omega_1$ and $A,B\in \bkalpha$,
  if $A\subset B$ and $\omega_\beta^A=\omega_\beta^B$,
  then $(\omega_{\beta+1}^B,<)$ end-extends $(\omega_{\beta+1}^A,<)$.
\end{Lem}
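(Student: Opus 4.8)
The plan is to read everything off the $|\omega_\beta|$-likeness clause, part~(5) of the definition of $\phi_\alpha$, together with the downward closure of the predicates $\omega_\gamma$ (parts~(1) and~(3)) and the fact that $L_\beta$ is the graph of a function (part~(4)). First recall that ``$(\omega_{\beta+1}^B,<)$ end-extends $(\omega_{\beta+1}^A,<)$'' means precisely: whenever $a\in\omega_{\beta+1}^A$, $b\in B$, and $b<^B a$, then $b\in\omega_{\beta+1}^A$. So I would fix such $a$ and $b$. Since $a\in\omega_{\beta+1}^A\subseteq\omega_{\beta+1}^B$ and $\omega_{\beta+1}^B$ is $<^B$-downward closed by part~(3), we immediately get $b\in\omega_{\beta+1}^B$; hence it suffices to show that $b$ belongs to the universe of $A$, since then $b\in\omega_{\beta+1}^B\cap A=\omega_{\beta+1}^A$.

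To show $b\in A$ I would split on whether $a\in\omega_\beta^A$. If $a\in\omega_\beta^A=\omega_\beta^B$, then since $\omega_\beta^B$ is $<^B$-downward closed and $b<^B a$, we get $b\in\omega_\beta^B=\omega_\beta^A\subseteq A$. Otherwise $a\in\omega_{\beta+1}^A\setminus\omega_\beta^A$; since the $\omega_\gamma$'s form a continuously increasing chain of initial segments (parts~(1) and~(3)) and $\omega_0^A$ is infinite, hence nonempty, the set $\downarrow a$ computed in $A$ contains all of $\omega_\beta^A$ and is in particular nonempty, so part~(5) applies in $A$ and $L_\beta^A(a,\bullet)$ is a total map of $\omega_\beta^A$ onto $\downarrow a$ (computed in $A$). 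Likewise $\downarrow a$ computed in $B$ is nonempty since it contains $b$, so part~(5) in $B$ gives a surjection of $\omega_\beta^B$ onto it; pick $y\in\omega_\beta^B$ with $L_\beta^B(a,y)=b$. Now $y\in\omega_\beta^B=\omega_\beta^A$, so $a$ and $y$ both lie in $A$ and $L_\beta^A(a,y)$ is a well-defined element of $A$. Because $A$ is a substructure and $\phi_\alpha$ forces $L_\beta$ to be the graph of a function, $L_\beta^A(a,y)=L_\beta^B(a,y)=b$, so $b\in A$, as required.

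The main point to get right is the equality $L_\beta^A(a,y)=L_\beta^B(a,y)$: one must observe that being a substructure gives $L_\beta^A\subseteq L_\beta^B$ as ternary relations, that part~(4) together with part~(5) for this particular $a$ guarantees the existence of a $z\in A$ with $L_\beta^A(a,y,z)$, and that functionality of $L_\beta^B$ then forces that $z$ to be the $B$-value $L_\beta^B(a,y)$. I do not expect a genuine obstacle here; the only subtlety worth flagging is that the $|\omega_\beta|$-likeness axiom is guaranteed available in $A$ only when $\downarrow a$ (in $A$) is nonempty, which is exactly why the case $a\in\omega_\beta^A$ --- where $\downarrow a$ could be empty, e.g.\ for the least element --- has to be treated separately and directly.
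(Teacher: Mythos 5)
Your proof is correct and follows essentially the same route as the paper's: use the surjectivity of $L_\beta(a,\bullet)$ in $B$ to find a witness $y\in\omega_\beta^B=\omega_\beta^A$ with $L^B_\beta(a,y)=b$, then use functionality of $L_\beta$ together with the substructure relation to conclude $b\in A$, hence $b\in\omega_{\beta+1}^A$. Your version is in fact slightly more careful than the paper's, since you isolate the case where clause~(5) might not apply in $A$ (empty lower cone, i.e.\ $a\in\omega_\beta^A$) and handle it directly via downward closure.
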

\begin{proof} Assume that there exists some $x\in \omega_{\beta+1}^B$ and $y\in\omega_{\beta+1}^A$ with 
$x<y$. By definition $L_\beta(y,\bullet)$ is a surjection from $\omega_\beta$ to $\downarrow y$. So, there exists some $z\in 
\omega_\beta^B$ such that $L^B_\beta(y,z)=x$. Since $\omega_\beta^A=\omega_\beta^B$, $z$ also belongs to $A$, which further
implies that $L^A_\beta(y,z)=x$.
So, $x$ must be an element of $\omega_{\beta+1}^A$.  
\end{proof}

\begin{Lem}\label{maximal} Assume $1\le \alpha<\omega$. All models, if any, in $\bkalpha$ of size $\aleph_\alpha$  are 
$\subset$-maximal.  
\end{Lem}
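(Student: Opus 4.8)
The plan is to show that any model $\fA \in \bkalpha$ of size $\aleph_\alpha$ (with $\alpha$ finite) admits no proper extension inside $\bkalpha$, by an inductive ``size-counting'' argument along the predicates $\omega_0,\dots,\omega_\alpha$. First I would observe that by Lemma~\ref{mainlemma}(3) any $\fB \supset \fA$ in $\bkalpha$ also has size exactly $\aleph_\alpha$, so the point is to rule out that $\fB$ strictly adds new elements. By part~(1) of the definition of $\phi_\alpha$, the universe of any model is the increasing union $\bigcup_{\beta\le\alpha}\omega_\beta$, and part~(5) forces each $\omega_{\beta+1}$ to be $|\omega_\beta|$-like, so for a finite $\alpha$ the only way to get total size $\aleph_\alpha$ is $\card{\omega_\beta^{\fA}} = \aleph_\beta$ for every $\beta \le \alpha$; in particular $\card{\omega_\beta^{\fA}} = \card{\omega_\beta^{\fB}}$ for each $\beta$ since $\fB$ also has these exact cardinalities.

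Next I would run an induction on $\beta \le \alpha$ proving $\omega_\beta^{\fA} = \omega_\beta^{\fB}$. The base case $\beta = 0$ follows because $\omega_0$ is, by parts (1)--(2), the unique initial copy of $\omega$ inside the linear order $<$, hence a definable set that both models must agree on (being a substructure, $\fA$ is an initial segment here, and $\omega_0^{\fB}$ is the set of $<^{\fB}$-predecessors-finite elements, which are already all in $\fA$ since $\omega_0^{\fA} = \omega_0^{\fB}$ as sets of order type $\omega$). For the successor step, assume $\omega_\beta^{\fA} = \omega_\beta^{\fB}$. Then Lemma~\ref{endext} applies and tells us $(\omega_{\beta+1}^{\fB},<)$ end-extends $(\omega_{\beta+1}^{\fA},<)$. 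But an end-extension that adds even one new point would make $\omega_{\beta+1}^{\fA}$ a proper initial segment of $\omega_{\beta+1}^{\fB}$, and since $\fB \models \phi_\alpha$, part~(5) forces $\card{\omega_{\beta+1}^{\fA}} \le \card{\omega_\beta^{\fB}} = \card{\omega_\beta^{\fA}} = \aleph_\beta$; this contradicts $\card{\omega_{\beta+1}^{\fA}} = \aleph_{\beta+1}$. (Here finiteness of $\alpha$ is used implicitly: we are carrying the exact equalities $\card{\omega_\beta^{\fA}} = \aleph_\beta$ which depend on the $|\omega_\beta|$-like structure bottoming out at $\omega_0$.) Hence $\omega_{\beta+1}^{\fA} = \omega_{\beta+1}^{\fB}$, completing the induction.

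Finally, since $\omega_\alpha^{\fA} = \omega_\alpha^{\fB}$ and (by part~(1)) the universe of $\fB$ is $\bigcup_{\beta\le\alpha}\omega_\beta^{\fB} = \omega_\alpha^{\fB} = \omega_\alpha^{\fA}$, which is the universe of $\fA$, the two models have the same underlying set; as $\fA$ is a substructure of $\fB$ on the same universe and all relations $<, L_\beta, T_\beta, S_\beta$ are determined by functional predicates whose graphs must then coincide, we get $\fA = \fB$. Thus $\fA$ is $\subset$-maximal. I expect the only delicate point to be the base case $\beta = 0$ and, more generally, making precise why ``same cardinality of each $\omega_\beta$'' is forced rather than merely ``same total size'' — this is exactly where the hypothesis $\alpha < \omega$ enters, since for infinite $\alpha$ one could in principle redistribute cardinalities among the levels. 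Everything else is a routine consequence of Lemma~\ref{endext} and the $|\omega_\beta|$-likeness built into $\phi_\alpha$.
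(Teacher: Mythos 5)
Your proof takes essentially the same route as the paper's: first note that any model of size $\aleph_\alpha$ must have $\card{\omega_\beta}=\aleph_\beta$ with every proper initial segment of $\omega_\beta$ of smaller size, then induct on $\beta\le\alpha$ using Lemma~\ref{endext} together with the $|\omega_\beta|$-likeness from clause (5) of $\phi_\alpha$ to conclude $\omega_\beta^A=\omega_\beta^B$ at every level, and finally observe that agreement on all the $\omega_\beta$'s forces $A=B$. The only differences are cosmetic: you make explicit the appeal to Lemma~\ref{mainlemma}(3) to bound the size of the extension (left implicit in the paper), and your base-case parenthetical is worded somewhat circularly, though the intended argument --- that $(\omega_0,<)\cong(\omega,\in)$ in both models --- is exactly the paper's.
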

\begin{proof}
 First observe that a model $A$ of $\phi_\alpha$ that has size $\aleph_\alpha$ must satisfy $|\omega_\beta^A|=\aleph_\beta$, for 
all $\beta\le \alpha$, while any strict initial segment of $\omega_\beta^A$ must have size $<\aleph_\beta$. 

Next, assume that $A\subset B$ and both $A,B$ are of size $\aleph_\alpha$. We prove by induction on $\beta\le\alpha$, that 
$\omega_\beta^A=\omega_\beta^B$. 

For $\beta=0$, this follows by the fact that $(\omega_0,<)\cong(\omega,\in)$. 
For the inductive step, assume that for some $\beta<\alpha$, $\omega_\beta^A=\omega_\beta^B$. By Lemma \ref{endext}, 
$\omega_{\beta+1}^B$ is an end-extension of 
$\omega_{\beta+1}^A$. By the above observation, both $\omega_{\beta+1}^A$ and $\omega_{\beta+1}^B$ have size $\aleph_{\beta+1}$ 
and each strict initial segment has size $<\aleph_{\beta+1}$. This leads to a contradiction if we assume that 
$\omega_{\beta+1}^A$ is a strict initial segment of $\omega_{\beta+1}^B$. Thus, it must be the case that 
$\omega_{\beta+1}^A=\omega_{\beta+1}^B$. 

To finish the proof, observe that if $A\subset B$ and $A,B$ agree on all $\omega_\beta$'s, then $A,B$ are equal. 
\end{proof}

Next we prove a series of lemmas that lead to Lemma \ref{failap} where it is proved that $\bkalpha$ fails amalgamation below $\aleph_\alpha$.
These lemmas do not require $\alpha$ to be finite. 

\begin{Lem} \label{Lem:beta}
  Assume $1\le\beta<\alpha<\omega_1$ and $M\in\bkalpha$
  such that $\card{M}\in\{\aleph_0,\aleph_\beta\}$.
  Then there exists $N\in\bk_\beta$ of size $\card{M}$.
  If $\card{M}=\aleph_\beta$, then $N$ can be chosen
  to be $\subset$-maximal too.
\end{Lem}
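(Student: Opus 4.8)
The plan is to transfer down to level $\beta$ the coherent special Aronszajn trees that $M$ encodes and then invoke Lemma~\ref{mainlemma}. If $\card M=\aleph_0$ there is essentially nothing to transfer: the statement only asks for \emph{some} countable $N\in\bk_\beta$, and one is built by hand. Let every $\omega_\gamma^N$ ($\gamma\le\beta$) be a copy of $(\omega,\in)$, let each $T_\gamma^N(x,y,\bullet)$ be the constant function $0$ with domain $\downarrow x$, let $L_\gamma^N(x,\bullet)$ be a surjection of $\omega$ onto $\downarrow x$ (any total function when $\downarrow x=\emptyset$), and put $S_\gamma^N(x,y)=x$. All nine axioms of $\phi_\beta$ are then immediate: coherence is trivial (each fibre is a single function), downward closure holds because the constant-$0$ functions form a chain under inclusion, and the last axiom holds since $w<x$ gives $S_\gamma^N(w,z)=w\ne x=S_\gamma^N(x,y)$.

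Now assume $\card M=\aleph_\beta$ (so $\beta\ge1$). I would first record a uniform fact: since $\phi_\delta$ forces $L_\gamma(x,\bullet)$ to surject $\omega_\gamma$ onto $\downarrow x$, in any model $A\models\phi_\delta$ every proper initial segment of $(\omega_{\gamma+1}^A,<)$ has size $\le\card{\omega_\gamma^A}$, hence $\card{\omega_{\gamma+1}^A}\le\card{\omega_\gamma^A}^+$; with continuity at limit levels and $\card{\omega_0^A}=\aleph_0$, this gives $\card{\omega_\gamma^A}\le\aleph_\gamma$ for all $\gamma\le\delta$ and $\card A=\sup_{\gamma\le\delta}\card{\omega_\gamma^A}$. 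Applied to $M$, the map $\gamma\mapsto\card{\omega_\gamma^M}$ is weakly increasing from $\aleph_0$, has supremum $\aleph_\beta$, rises by at most one aleph at successor steps, and is continuous at limit steps; an elementary induction then shows that for every $\mu<\beta$ there is a ``jump level'' $\eta_\mu<\alpha$ with $\card{\omega_{\eta_\mu}^M}=\aleph_\mu$ and $\card{\omega_{\eta_\mu+1}^M}=\aleph_{\mu+1}$.

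For each such $\mu$ I would mimic the proof of Lemma~\ref{mainlemma}(1) at level $\eta_\mu$: $T_{\eta_\mu}^M$ is a coherent functional pseudotree over the linear order $\omega_{\eta_\mu+1}^M$, which has size $\aleph_{\mu+1}$ and all proper initial segments of size $\le\aleph_\mu$, whence it has cofinality $\aleph_{\mu+1}$ and fibres of size $\le\aleph_\mu$, while $S_{\eta_\mu}^M$ writes $T_{\eta_\mu}^M$ as a union of $\aleph_\mu$-many of its antichains; restricting the indices to any $W\subset\omega_{\eta_\mu+1}^M$ with $(W,<)\cong(\omega_{\mu+1},\in)$ then yields a genuine tree --- well-founded because $W$ is an ordinal, exactly as for the tree $U$ in Lemma~\ref{mainlemma}(1) --- which is a \csat{\alp{\mu}}. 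Doing this for all $\mu<\beta$ supplies a \csat{\alp{\mu}} for each $\mu<\beta$, so Lemma~\ref{mainlemma}(2), applied with $\beta$ in the role of $\alpha$, produces a model $\fB\models\phi_\beta$ with $\card\fB=\aleph_\beta$; I set $N:=\fB$. That $N$ is $\subset$-maximal is immediate from Lemma~\ref{maximal} when $\beta<\omega$; in general, if $N\subset C\models\phi_\beta$, then $\card C\le\aleph_\beta$ by Lemma~\ref{mainlemma}(3), so $\card C=\aleph_\beta$, and since $N$'s levels are the genuine ordinals $\omega_\gamma$, the uniform bound forces $\card{\omega_\gamma^C}=\aleph_\gamma$ for all $\gamma\le\beta$; an induction on $\gamma$ via Lemma~\ref{endext} (base case $(\omega_0,<)\cong(\omega,\in)$, as in Lemma~\ref{maximal}) gives $\omega_\gamma^C=\omega_\gamma^N$ throughout, since a proper end-extension at stage $\gamma+1$ would be a proper initial segment of $\omega_{\gamma+1}^C$ of size $\aleph_{\gamma+1}$, contradicting $\card{\omega_\gamma^C}=\aleph_\gamma$; hence $C$ and $N$ share a universe, so $N=C$.

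The step I expect to be the main obstacle is the combinatorial bookkeeping just described: verifying that the cardinalities $\card{\omega_\gamma^M}$ really do run through every aleph up to $\aleph_\beta$, so that the jump levels $\eta_\mu$ exist, and checking that the coherent pseudotree $T_{\eta_\mu}^M$ restricted to an ordinal-indexed set is honestly a coherent special $\aleph_{\mu+1}$-Aronszajn tree in the normalized form (a downward closed suborder of $(\omega_{\mu+1})^{<\omega_{\mu+1}}$) that Lemma~\ref{mainlemma}(2) takes as input.
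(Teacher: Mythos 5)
Your countable case is fine (the lemma only asks for existence, and your hand-built trivial model does satisfy all clauses of $\phi_\beta$), and your ``jump level'' bookkeeping for $\card{M}=\aleph_\beta$ is correct. The genuine gap is at the central step: the claim that restricting the pseudotree $T^M_{\eta_\mu}$ to an index set $W\cong\omega_{\mu+1}$ yields a \emph{coherent} \sat{\alp{\mu}}. What that argument actually produces, exactly as in Lemma~\ref{mainlemma}(1), is a \sat{\alp{\mu}} that merely embeds into a coherent pseudotree; the paper explicitly records in its Open Problems (item (6)) that its method does \emph{not} recover a \csat{\aleph_2} from a large model of $\phi_\alpha$, and whether any $\lomegaone$-sentence can force this is left open there. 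The obstruction is concrete: the nodes of your tree are functions whose domains are the cones $\downarrow x$ computed in the non-well-ordered linear order $\omega^M_{\eta_\mu+1}$, whereas Lemma~\ref{mainlemma}(2) takes as input trees in the normalized form of downward closed, coherent suborders of $((\omega_{\mu+1})^{<\omega_{\mu+1}},\subset)$. To put your tree into that form you must re-index each node by its trace on the well-ordered set $W$, and this re-indexing need not be injective or order-reflecting: two $\subset$-incomparable nodes $T^M_{\eta_\mu}(x,y,\bullet)$, $T^M_{\eta_\mu}(x',y',\bullet)$ that differ only at points of $\omega^M_{\eta_\mu+1}\setminus W$ acquire comparable traces, so the specializing map $S^M_{\eta_\mu}$ no longer witnesses specialness of the re-indexed tree (and downward closure and Aronszajn-ness of the re-indexed tree are likewise unclear). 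Hence Lemma~\ref{mainlemma}(2) cannot be invoked as a black box on the extracted trees.

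The paper's proof avoids extracting well-founded trees altogether: it takes the set $J$ of jump levels of $M$ (essentially your $\eta_\mu$'s), fixes the order isomorphism $g\colon\beta+1\to J$ and bijections $f_\gamma\colon\omega^M_{g(\gamma)}\to\omega^M_{g(\gamma+1)-1}$, and defines $N\in\bk_\beta$ with universe $\omega^M_{g(\beta)}$ and $\omega^N_\gamma=\omega^M_{g(\gamma)}$ by re-using $M$'s own pseudotrees: $T^N_\gamma(x,y,\bullet)=T^M_{g(\gamma+1)-1}(x,f_\gamma(y),\bullet)$, with $L^N_\gamma$ and $S^N_\gamma$ relabeled through $f_\gamma$ in the same way, and the levels outside $J$ simply discarded. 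Coherence, downward closure and specialness are then inherited verbatim from $M$, so no \csat{\alp{\mu}} is ever needed. Your maximality argument for the $\aleph_\beta$ case is essentially the paper's (full-sized levels with small initial segments, then the induction of Lemma~\ref{maximal} via Lemma~\ref{endext}) and would go through once $N$ is produced this way; the repair to your proof is precisely to transfer the pseudotrees of $M$ at the jump levels directly rather than to convert them into coherent special Aronszajn trees first.
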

\begin{proof}
  If $\card{M}=\aleph_0$, then let $J=\beta+1$. Otherwise,
  let $J$ denote the set of all $\gamma\leq\alpha$ with the property that
  $\card{\omega_\delta^M}<\card{\omega_\gamma^M}$ for all $\delta<\gamma$.
  In both cases, there is a unique order isomorphism $g\colon\beta+1\to J$.
  Note that $g$ is a continuous map from $\beta+1$ to $\alpha+1$
  and $g$ maps successor ordinals to successor ordinals.
  For each $\gamma<\beta$, choose a bijection
  $f_\gamma$ from $\omega_{g(\gamma)}^M$ to $\omega_{g(\gamma+1)-1}^M$.
  We construct $N\in\bk_{\beta}$
  with universe $\omega_{g(\beta)}^M$ by
  relabeling the $\gamma^{\text{th}}$ pseudotree of $M$
  for $\gamma\in \alpha\cap J$
  and eliminating the $\gamma^{\text{th}}$ pseudotree of $M$
  for $\gamma\in\alpha\setminus J$.
\begin{enumerate}
  \item $\omega_\gamma^N=\omega_{g(\gamma)}^M$ for all $\gamma\leq\beta$.
  \item For each $\gamma<\beta$,
    $x\in\omega_{\gamma+1}^N$, and $y\in\omega_{\gamma}^N$:
    \begin{enumerate}
    \item Let $L^{N}_\gamma(x,y)=L^{M}_{g(\gamma+1)-1}(x,f_\gamma(y))$.
      This defines a surjection from $\omega_{\gamma}^N$ to $\downarrow x$. 
    \item Let $T^{N}_\gamma(x,y,\bullet)=T^{M}_{g(\gamma+1)-1}(x,f_\gamma(y),\bullet)$.
    \item Let $S^{N}_\gamma(x,y)=f_\gamma^{-1}(S^{M}_{g(\gamma+1)-1}(x,f_\gamma(y)))$.
    \end{enumerate}
\end{enumerate}
In the case $\card{M}=\aleph_\beta$,
to see that $N$ is $\subset$-maximal, note that
$\card{\omega_\gamma^N}=\card{\omega_{g(\gamma)}^M}=\aleph_\gamma$
for each $\gamma\leq\beta$, which in turn implies that
each strict initial segment of $\omega_\gamma^N$ has size $<\aleph_\gamma$,
for each $\gamma\leq\beta$.
Therefore, the proof of Lemma~\ref{maximal}
shows that $N$ is $\subset$-maximal.
\end{proof}

\begin{Def}
  Given $\alpha<\alpha'$, $M\in\bk_\alpha$, and $M'\in\bk_{\alpha'}$,
  we say that $M'$ \emph{end-extends} $M$ if
  $M$ and $M'$ agree on $\omega_{\gamma}$ for all $\gamma\leq\alpha$
  and  on $L_{\gamma}, T_{\gamma}, S_{\gamma}$ for all $\gamma<\alpha$. 
\end{Def}

\begin{Lem}\label{Lem:betaplus} Assume $1\le\alpha<\omega_1$,
  $M\in\bk_\alpha$, and let $L$ be a linear order of size $\leq\card{M}$.
  Then there is $N\in\bk_{\alpha+1}$ of size $\card{M}$
  such that $N$ end-extends $M$ and
  $\omega_{\alpha+1}^N\setminus \omega_\alpha^N\cong L$.
  
  Moreover, we may choose $N$ such that,
  for each linear order $L'$ of size $|M|$ that end-extends $L$,
  there exists $N'\in\bkalpha$ such that $N\subset N'$,
  $\card{N}=\card{N'}$, and $\omega_{\alpha+1}^{N'}\setminus\omega_\alpha^{N'}\cong L'$.
\end{Lem}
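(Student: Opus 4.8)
The plan is to obtain $N$ by keeping $M$ intact as the ``$\omega_\alpha$-part'' of $N$ and placing one completely trivial extra pseudotree level on top of it. Concretely, I would set $\omega_\gamma^N=\omega_\gamma^M$ for all $\gamma\le\alpha$ and keep $L_\gamma^M,T_\gamma^M,S_\gamma^M$ unchanged for $\gamma<\alpha$; this guarantees that $N$ end-extends $M$ and that every clause of $\phi_{\alpha+1}$ indexed by some $\gamma<\alpha$ already holds. For the new level, let $\omega_{\alpha+1}^N$ be $\omega_\alpha^M$ followed by a fresh copy of $L$ disjoint from $\omega_\alpha^M$, with $\omega_\alpha^M$ an initial segment. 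Write $\mu=\card{M}$; this equals $\card{\omega_\alpha^M}$ since the universe of $M$ is $\bigcup_{\beta\le\alpha}\omega_\beta^M=\omega_\alpha^M$, and it is $\ge\aleph_0$. Using $\card{L}\le\mu$ we then get $\card{N}=\card{\omega_{\alpha+1}^N}=\mu=\card{M}$ and $\omega_{\alpha+1}^N\setminus\omega_\alpha^N\cong L$.

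The crux is to define $L_\alpha^N$, $T_\alpha^N$ and $S_\alpha^N$ so that $N\models\phi_{\alpha+1}$. The key observation is that $\phi_{\alpha+1}$ does \emph{not} require $\omega_{\alpha+1}$ to be larger than $\omega_\alpha$; it only forces every proper initial segment of $\omega_{\alpha+1}$ to have size $\le\card{\omega_\alpha}$. Hence the pseudotree coded by $T_\alpha$ may be taken to be a single chain: fix $0\in\omega_0^M$ and, for every $x\in\omega_{\alpha+1}^N$ and every $y$, let $T_\alpha^N(x,y,\bullet)$ be the constant function with value $0$ and domain ${\downarrow x}$, which is vacuously coherent and downward closed. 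Choose any surjection $L_\alpha^N(x,\bullet)\colon\omega_\alpha^M\to{\downarrow x}$ whenever ${\downarrow x}\ne\emptyset$ (possible since $\card{\downarrow x}\le\mu$). For specialness, fix a set $B\subseteq\omega_\alpha^M$ with $\card{B}=\card{\omega_\alpha^M\setminus B}=\mu$ and an injection $\iota\colon\omega_{\alpha+1}^N\hookrightarrow B$, and put $S_\alpha^N(x,y)=\iota(x)$. Since the nodes of $T_\alpha^N$ form a single $<$-chain and $\iota$ is injective, distinct nodes receive distinct colors, so the specialness clause holds. This $N$ witnesses the first assertion; checking $N\models\phi_{\alpha+1}$ is a routine pass through clauses (1)--(9).

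For the ``moreover'' clause, the point of reserving $B$ becomes apparent: the special coloring must be \emph{extendable} once $L'$ appears, without disturbing $N$. Given $L'$ of size $\mu$ end-extending $L$, realize $L'$ as $L$ together with a fresh set disjoint from $\omega_\alpha^M\cup L$, and build $N'$ exactly as $N$ but over $L'$: take $\omega_\gamma^{N'}=\omega_\gamma^N$ for $\gamma\le\alpha$ and all symbols of index $<\alpha$ from $M$, let $\omega_{\alpha+1}^{N'}$ be $\omega_\alpha^M$ followed by $L'$, keep $T_\alpha^{N'}$ constantly $0$, and let $L_\alpha^{N'}$ agree with $L_\alpha^N$ on $N$ and be chosen freely as a surjection onto the (still size $\le\mu$) lower cone of each new element of $L'\setminus L$. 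Extend $\iota$ to an injection $\iota'\colon\omega_{\alpha+1}^{N'}\hookrightarrow\omega_\alpha^M$ by mapping $L'\setminus L$ injectively into $\omega_\alpha^M\setminus B$ --- possible as $\card{L'\setminus L}\le\mu=\card{\omega_\alpha^M\setminus B}$ --- and set $S_\alpha^{N'}(x,y)=\iota'(x)$. Then $N'\in\bk_{\alpha+1}$, $\card{N'}=\mu=\card{N}$, and $\omega_{\alpha+1}^{N'}\setminus\omega_\alpha^{N'}\cong L'$. Finally $N\subset N'$: since $L$ is an initial segment of $L'$, the lower cone ${\downarrow^{N'} x}$ equals ${\downarrow^{N} x}$ for every $x\in N$, so $L_\alpha^{N'}$ and $T_\alpha^{N'}$ restrict on $N$ to $L_\alpha^N$ and $T_\alpha^N$; moreover $\iota'\restriction N=\iota$ gives $S_\alpha^{N'}\restriction N=S_\alpha^N$, and $<$ together with every symbol of index $<\alpha$ is inherited verbatim from $M$.

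I do not expect a genuine obstacle. The one thing that must be planned before $L'$ is known is to leave $\mu$-much room (the set $B$) inside $\omega_\alpha^M$ for the coloring $S_\alpha$ to grow into; everything else is forced and trivial, the only recurring point to watch being that the lower cone ${\downarrow x}$ of an ``old'' $x$ is computed identically in $M$, $N$ and $N'$. (If one prefers the Skolemized vocabulary of the Remark, the extra predicates $C^\alpha_n$ and $P$ are definable from this data --- $C^\alpha_n$ is trivial because $T_\alpha$ is constant on each level, and $P(x,y,w,z)$ just asserts $w<x$ --- and they extend from $N$ to $N'$ the same way.)
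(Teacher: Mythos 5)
Your proposal is correct and takes essentially the same approach as the paper's proof: the new top level carries the trivial pseudotree of constant-$0^M$ functions, $L_\alpha$ is an arbitrary surjection onto each lower cone, and specialness is witnessed by an injection of $\omega_{\alpha+1}$ into $\omega_\alpha^M$ whose range is deliberately co-large in $\omega_\alpha^M$ so it can be extended once $L'$ is given — exactly the paper's injections $g\subseteq f$, with your set $B$ playing the role of the reserved room.
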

\begin{proof}
  
  Given $L$ and $L'$, we will construct $N$ and $N'$ concurrently 
  so that $N$ depends on $L$ but not on $L'$.
  Without loss of generality we may assume that
  $L'$ and $\omega_\alpha^M$ are disjoint.

  Fix some injection $g$ from $\omega^M_\alpha\cup  L$ to $\omega^M_\alpha$ so that both 
  the range of $g$ and its complement have size $|M|$. Extend $g$ to an injection $f$ from 
  $\omega^M_\alpha\cup  L'$ to $\omega^M_\alpha$. Let $0^M$ denote $\min(\omega_0^M)$.
  End-extend $M$ to $N'\in\bk_{\alpha+1}$ as follows.
  \begin{enumerate}
  \item $\omega^{N'}_{\alpha+1}=\omega^M_\alpha+L'$.
  \item For $0^M\not=x\in \omega^{N'}_{\alpha+1}$, let $L^{N'}_\alpha(x,\bullet)$
    be an arbitrary surjection from $\omega_\alpha^M$ to $\downarrow x$.
  \item For $x\in \omega^{N'}_{\alpha+1}$ and $y\in \omega^{N'}_{\alpha}$,
    let $S^{N'}_\alpha(x,y)=f(x)$ and
    \(T^{N'}_{\alpha}(x,y,z)= 0^M$ for all $z<x$.
  \end{enumerate}
  If we also stipulate that $\omega^{N}_{\alpha+1}=\omega^M_\alpha+L$, the above implicitly defines $L_{\alpha}^N$, 
$S_{\alpha}^N$, and $T_{\alpha}^N$
  so that they depend on $L$ but not on $L'$.
  
  \end{proof}

\begin{Cor}\label{Cor:alpha}
  Assume $1\le\alpha<\alpha'<\omega_1$, $M\in\bkalpha$,
  and a sequence of linear orders
  $L_\gamma$ for $\alpha\leq\gamma<\alpha'$ each such that
  $\card{L_\gamma}\leq\card{M}$.
  Then there is $N\in\bk_{\alpha'}$ of size $\card{M}$
  that end-extends $M$ and satisfies
  $\omega^{N}_{\gamma+1}\setminus \omega^{N}_\gamma\cong L_\gamma$
  for $\alpha\leq\gamma<\alpha'$.
  
  Moreover, we may choose $N$ such that,
  if $M'\in \bkalpha$ satisfies $M\subset M'$ and $|M|=|M'|$, then there exists some end-extension $N'\in \bk_{\alpha'}$ of $M'$ 
 that satisfies $|N'|=|M'|$, $N\subset N'$ and  $\omega_{\gamma+1}^{N'}\setminus \omega^{N'}_\gamma\cong L_\gamma$
  for $\alpha\leq\gamma<\alpha'$.
   \end{Cor}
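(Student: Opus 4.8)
The plan is to iterate Lemma~\ref{Lem:betaplus} along the interval $[\alpha,\alpha')$, much as Corollary~\ref{Cor:alpha} relates to it. First I would prove the basic (non-``moreover'') statement by transfinite recursion on $\gamma\in[\alpha,\alpha']$: set $N_\alpha=M$, and given $N_\gamma\in\bk_\gamma$ of size $\card{M}$ that end-extends $M$, apply Lemma~\ref{Lem:betaplus} to $N_\gamma$ and the linear order $L_\gamma$ (which has size $\leq\card{M}=\card{N_\gamma}$) to obtain $N_{\gamma+1}\in\bk_{\gamma+1}$ of size $\card{M}$ end-extending $N_\gamma$ with $\omega_{\gamma+1}^{N_{\gamma+1}}\setminus\omega_\gamma^{N_{\gamma+1}}\cong L_\gamma$. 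At limit stages $\lambda\leq\alpha'$, let $N_\lambda$ be the union $\bigcup_{\gamma<\lambda}N_\gamma$; since the models form an end-extension chain, this union is unambiguous (each $\omega_\delta$, $L_\delta$, $T_\delta$, $S_\delta$ stabilizes) and one checks directly that it satisfies parts (1)--(9) of $\phi_\lambda$, its universe is $\bigcup_{\beta\le\lambda}\omega_\beta$, and it has size $\card{M}$. Then $N:=N_{\alpha'}$ works. Note that the continuity of this construction and the fact that $\alpha'<\omega_1$ is countable keep the size bounded by $\card{M}$ at every stage.

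For the ``moreover'' clause, I would run the same recursion but carry along a parallel chain for $M'$. Concretely, redo the recursion so that at each successor step $\gamma\to\gamma+1$ I invoke the full strength of Lemma~\ref{Lem:betaplus}: given $N_\gamma\subset N'_\gamma$ with $\card{N_\gamma}=\card{N'_\gamma}=\card{M}$, both end-extending $M\subset M'$ respectively, I choose a linear order $L'_\gamma$ of size $\card{M'}$ end-extending $L_\gamma$ (for instance $L'_\gamma=L_\gamma$ itself works if $\card{M}=\card{M'}$, which is our hypothesis), apply Lemma~\ref{Lem:betaplus} to $N_\gamma$, $L_\gamma$, $L'_\gamma$, obtaining $N_{\gamma+1}$ depending only on $L_\gamma$ and $N_{\gamma+1}\subset N'_{\gamma+1}$ with $N'_{\gamma+1}$ end-extending... — but here a subtlety arises, since Lemma~\ref{Lem:betaplus} as stated starts from a single model $M$ and produces $N\subset N'$ both end-extending that $M$, not a $\subset$-pair end-extending a given $\subset$-pair. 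So I would instead first run the recursion to build $N=N_{\alpha'}$ from $M$ (using the $L_\gamma$), then separately run it to build $N'=N'_{\alpha'}$ from $M'$ (using the same $L_\gamma$, legitimate since $\card{L_\gamma}\le\card{M}=\card{M'}$), and finally verify $N\subset N'$. The containment $N\subset N'$ holds because at each successor step the ``moreover'' part of Lemma~\ref{Lem:betaplus} guarantees the stage-$(\gamma{+}1)$ model built over $N'_\gamma$ with parameter $L_\gamma$ contains the stage-$(\gamma{+}1)$ model built over $N_\gamma$ with parameter $L_\gamma$, provided the two constructions are made coherently; this forces me to fix, once and for all at the start, the auxiliary injections $g,f$ and surjections $L_\gamma(x,\bullet)$ used inside Lemma~\ref{Lem:betaplus} so that the $M$-construction is literally a restriction of the $M'$-construction.

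The main obstacle is precisely this bookkeeping: making the two parallel recursions genuinely compatible so that $N\subset N'$ comes out on the nose rather than merely up to isomorphism. This requires threading the hypothesis $M\subset M'$ through every application of Lemma~\ref{Lem:betaplus}, and choosing the arbitrary data in that lemma (the injection into $\omega_\alpha$, the surjections onto lower cones) for the $M'$-side and then restricting them to the $M$-side. I expect that if one states a slightly strengthened inductive hypothesis — ``$N_\gamma\subset N'_\gamma$, both of size $\card{M}$, $N_\gamma$ end-extends $M$, $N'_\gamma$ end-extends $M'$, and the defining data of $N_\gamma$ is the restriction of that of $N'_\gamma$'' — the successor step follows immediately from the ``moreover'' clause of Lemma~\ref{Lem:betaplus} and the limit step from the unambiguity of end-extension unions, closing the recursion. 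The size bound is automatic throughout because $\alpha'$ is countable and each step preserves cardinality $\card{M}$.
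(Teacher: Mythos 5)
Your construction of $N$ itself (iterate Lemma~\ref{Lem:betaplus} at successors, take unions at limits) is exactly the paper's first line, and your overall strategy for the ``moreover'' clause -- build $N'$ level by level so that the defining data of $N$ is the restriction of that of $N'$ -- is also the paper's. But the way you close the successor step has a genuine hole, and you half-noticed it yourself. The ``moreover'' clause of Lemma~\ref{Lem:betaplus} extends the \emph{top} linear order $L$ to $L'$ over a \emph{fixed} base model $M$; it says nothing about enlarging the base model from $N_\gamma$ to $N'_\gamma$ under a fixed top order $L_\gamma$, which is what your successor step needs. After correctly flagging this mismatch you nevertheless assert that the strengthened inductive hypothesis makes the successor step ``follow immediately from the moreover clause of Lemma~\ref{Lem:betaplus}'' -- it does not; that step has to be carried out by hand. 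There is also a quantifier problem in your proposed bookkeeping: the corollary requires $N$ to be fixed \emph{before} $M'$ is given (``we may choose $N$ such that, if $M'\supset M$\dots''), so you cannot ``choose the arbitrary data for the $M'$-side and then restrict to the $M$-side''; the construction must go the other way, by showing that whatever data was used for $N$ can be \emph{extended} over an arbitrary $M'$.

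The missing content, which is what the paper actually writes out, is this extension step. One first observes that $N'$ acquires no points beyond those of $M'$: $\omega_{\alpha'}^{N'}\setminus\omega_{\alpha'}^{N}=\omega_\alpha^{M'}\setminus\omega_\alpha^{M}$, so above level $\alpha$ the levels of $N'$ are those of $N$, only with larger lower cones. Then, by induction on $\gamma$ (limits trivial), one extends $N$'s data at each successor level: extend each surjection $L^{N}_\gamma(x,\bullet)$ to one onto the larger cone $\downarrow^{N'}x$ by sending the new arguments in $\omega_\gamma^{N'}\setminus\omega_\gamma^{N}$ onto the new cone points; keep the level-$\gamma$ pseudotree functions constant with value $0^M$ on the enlarged domains, so coherence and downward closure (conditions (6)--(7) of $\phi_{\alpha'}$) are automatic; and extend $S^{N}_\gamma(x,\bullet)$ to the new arguments in a way that still witnesses condition (9), e.g.\ keeping $S^{N'}_\gamma(x,\bullet)$ constant in its second argument with the injective dependence on $x$ inherited from the construction of $N$. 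None of this is hard, but it is a direct verification about $\phi_{\alpha'}$, not a consequence of Lemma~\ref{Lem:betaplus}'s ``moreover,'' and without it your recursion does not close.
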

\begin{proof}
  Create $N$ by repeatedly end-extending $M$
  using Lemma~\ref{Lem:betaplus} at successor stages
  and unions at limit stages.
  
  To prove the claim about $N'$ we follow the proof of Lemma \ref{Lem:betaplus}. The differences are now that  (a) 
$\omega_\alpha^N$ and $\omega_\alpha^{N'}$ may not be the same and (b) the construction of $N$ and $N'$ 
guarantees that $\omega_{\alpha'}^{N'}\setminus\omega_{\alpha'}^{N}$ equals $\omega_{\alpha}^{M'}\setminus\omega_{\alpha}^{M}$, 
i.e. no new points are added to $N$ other than the points added to $M$.

The proof is by induction on $\gamma$. The limit stages are trivial, so we describe only how $L_{\gamma},S_{\gamma}$ and 
$T_{\gamma}$ are defined on the successor stages.
\begin{itemize}
 \item For $0^M\not=x\in \omega^{N'}_{\gamma+1}$, let $L^{N'}_\gamma(x,\bullet)$ equal $L^{N}_\gamma(x,\bullet)$ 
when restricted to domain $\omega_{\gamma}^N$, and let $L^{N'}_\gamma(x,\bullet)$ be the identity otherwise. 
    \item Similarly, for $x\in \omega^{N'}_{\gamma+1}$, let $S^{N'}_\gamma(x,\bullet)$  equal $S^{N}_\gamma(x,\bullet)$  
when restricted to domain $\omega_{\gamma}^N$, and let $S^{N'}_\gamma(x,\bullet)$  be the identity otherwise. 
     \item For $x\in \omega^{N'}_{\gamma+1}$ and $y\in \omega^{N'}_{\gamma}$,
    let \(T^{N'}_{\gamma}(x,y,z)= 0^M$ for all $z<x$.
\end{itemize}
\end{proof}

\begin{Lem}\label{failap} Let $1\le\beta<\alpha<\omega_1$ and $\gamma\in\{0,\beta\}$
  and assume that $\bkalpha$ has a model of size $\aleph_\gamma$.
  Then amalgamation fails in $\aleph_\gamma$. 
\end{Lem}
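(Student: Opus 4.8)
The plan is to build two models $B_0, B_1 \in \bkalpha$, each of size $\aleph_\gamma$, that share a common submodel $A$ of size $\aleph_\gamma$, but which cannot be jointly embedded over $A$ into any single model of $\phi_\alpha$ — because doing so would force some level $\omega_{\beta'+1}$ of the amalgam to exceed the bound imposed by the $|\omega_{\beta'}|$-like requirement (part (5) of the definition of $\phi_\alpha$). The obstruction to amalgamation is thus cardinality-arithmetic: in $B_0$ and $B_1$ we will arrange the same coordinate $\omega_{\delta}$ to have two ``incompatible'' extensions $\omega_{\delta+1}$ whose union is too big.

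Here is the construction in order. First, using Lemma~\ref{Lem:beta}, fix a $\subset$-maximal (when $\gamma=\beta$) or merely existing (when $\gamma=0$) model $M\in\bk_{\gamma+1}$ of size $\aleph_\gamma$; this is the ``base'' whose top pseudotree we will split. Concretely I want $M$ to have $|\omega_\gamma^M|=\aleph_\gamma$ and $\omega_{\gamma+1}^M$ of size $\aleph_\gamma$ with every proper initial segment of size $<\aleph_\gamma$. Second, let $A$ be the submodel of $M$ obtained by taking a proper initial segment $\omega_{\gamma}^A$ of $\omega_{\gamma}^M$ together with everything below it; so $\omega_\gamma^A=\omega_\gamma^M$ but I instead want $A$ to be an end-extendable piece — more precisely, apply Corollary~\ref{Cor:alpha} to end-extend $A$ in two different ways to $\alpha$ so that the resulting $B_0,B_1$ differ at the first level above $A$: in $B_0$ the order $\omega_{\gamma+1}$ has order type forcing it to ``use up'' all of $\omega_\gamma$ on one initial segment, while in $B_1$ it is arranged so that a different set of points sits below a fixed element. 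Third, I would argue that any common $\subset$-extension $C\models\phi_\alpha$ of $B_0$ and $B_1$ must have $\omega_\gamma^C=\omega_\gamma^A$ (by Lemma~\ref{endext}, since $\omega_\gamma$ is already maximal below, this stays fixed), hence $\omega_{\gamma+1}^C$ end-extends both $\omega_{\gamma+1}^{B_0}$ and $\omega_{\gamma+1}^{B_1}$ by Lemma~\ref{endext} again; but then a single element $x\in\omega_{\gamma+1}^C$ would have both the $B_0$-points and the $B_1$-points below it, and $L_\gamma(x,\bullet)$ would need to surject from $\omega_\gamma^C=\omega_\gamma^A$ of size $\le\aleph_\gamma$ onto a set of size $>\aleph_\gamma$ — impossible. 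Finally, the ``moreover'' clauses of Lemma~\ref{Lem:betaplus} and Corollary~\ref{Cor:alpha} are exactly what let me carry these incompatible configurations up from level $\gamma+1$ to level $\alpha$ while keeping sizes equal to $\aleph_\gamma$, so $B_0,B_1\in\bkalpha$.

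The main obstacle is getting the combinatorics at the splitting level right: I need $A$, $B_0$, $B_1$ all genuinely of size $\aleph_\gamma$ with $B_i$ honest models of $\phi_\alpha$ (in particular satisfying the coherence and downward-closure clauses (6),(7) and the specialness clauses (8),(9)), yet with $\omega_{\gamma+1}^{B_0}$ and $\omega_{\gamma+1}^{B_1}$ arranged so that their amalgam over the common $\omega_\gamma$ is forced to be larger than $\aleph_\gamma$. Concretely one wants: a point $x$ appearing in $\omega_{\gamma+1}^{B_0}$ with a cofinal-in-$\omega_{\gamma+1}^{B_0}$ set below it of size $\aleph_\gamma$, and an incomparable choice in $B_1$, so that in any amalgam $x$ dominates more than $\aleph_\gamma$-many elements of $\omega_\gamma$. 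Making $B_0,B_1$ share $A$ honestly while differing in this forced way, and checking that the trivial constant pseudotree data ($T_\gamma(x,y,z)=0^M$ as in Lemma~\ref{Lem:betaplus}) still satisfies all nine clauses, is the routine-but-delicate heart of the argument; everything else is bookkeeping via the already-established end-extension lemmas.
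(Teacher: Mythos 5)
Your scaffolding (a maximal base from Lemma \ref{Lem:beta}, two different end-extensions via Lemma \ref{Lem:betaplus} and Corollary \ref{Cor:alpha}, and Lemma \ref{endext} to constrain any amalgam) matches the paper's route, but the obstruction you propose to finish the argument is not one that can be arranged, and you explicitly defer its realization to the ``routine-but-delicate heart.'' You want the amalgam to contain a point $x\in\omega_{\gamma+1}$ dominating more than $\aleph_\gamma$-many elements, so that the surjectivity clause (5) fails. This can never be forced by structures of size $\aleph_\gamma$: the only predecessors of $x$ an amalgam is obliged to contain come from the images of $B_0\cup B_1$, a set of size at most $\aleph_\gamma$, and nothing in $\phi_\alpha$ compels the amalgam to add further points below $x$; since $\card{\omega_\gamma}\ge\aleph_\gamma$ in any extension of $A$, a surjection from $\omega_\gamma$ onto $\downarrow x$ is never excluded on cardinality grounds. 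So the ``incompatible configurations'' your plan rests on do not exist, and the proof cannot be completed this way. (A smaller slip: you ask Lemma \ref{Lem:beta} for a maximal model in $\bk_{\gamma+1}$ of size $\aleph_\gamma$; what it provides, and what is actually needed, is a $\subset$-maximal $N\in\bk_\gamma$ of size $\aleph_\gamma$, used only to pin down the $\omega_\gamma$-part of any amalgam.)

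The missing idea is that the obstruction is order-theoretic, not cardinal-arithmetic: linear orders fail amalgamation under \emph{end-extension}. Starting from the maximal $N\in\bk_\gamma$ (or from $\omega_0$ when $\gamma=0$), Lemma \ref{Lem:betaplus} and Corollary \ref{Cor:alpha} give $A\subset B$ and $A\subset C$ in $\bkalpha$, all of size $\aleph_\gamma$, with $\omega_{\gamma+1}^A=\omega_\gamma^N+\omega$, $\omega_{\gamma+1}^B=\omega_\gamma^N+\omega\cdot2$, and $\omega_{\gamma+1}^C=\omega_\gamma^N+\omega+\mathbb{Q}$ (for $\gamma=0$: $\omega_1^A=\omega\cdot2$, $\omega_1^B=\omega\cdot3$, $\omega_1^C=\omega\cdot2+\mathbb{Q}$). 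If $D$ were an amalgam, maximality of $N$ forces $\omega_\gamma^D=\omega_\gamma^N$, so Lemma \ref{endext} makes the images of $\omega_{\gamma+1}^B$ and $\omega_{\gamma+1}^C$ initial segments of $\omega_{\gamma+1}^D$, hence comparable under inclusion; but above the common segment $\omega_{\gamma+1}^A$ one is a copy of $\omega$ and the other a copy of $\mathbb{Q}$, and neither can be an initial segment of the other (a nonempty downward-closed subset of $\mathbb{Q}$ has no least element, so it is not a copy of $\omega$; and $\mathbb{Q}$ does not embed as a suborder of the well-order $\omega$). With this replacement for your third step, your appeal to the ``moreover'' clauses of Lemma \ref{Lem:betaplus} and Corollary \ref{Cor:alpha} to lift the configuration to $\bkalpha$ at size $\aleph_\gamma$ goes through as intended.
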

\begin{proof} We give an example of a triple $(A,B,C)$ in 
$\bkalpha$ that can not be amalgamated. The reason that amalgamation fails is that linear orders fail amalgamation under 
end-extension. 

Assume $M\in\bkalpha$ and $|M|=\aleph_\gamma$. The proof splits into two cases: $\gamma=\beta>0$ and $\gamma=0$. We give the 
details for the first case and sketch the proof of the second case.

By Lemma \ref{Lem:beta}, there exists a $\subset$-maximal $N\in 
\bk_\gamma$ with $|N|=\aleph_\gamma$. End-extend $N$ using 
Lemma \ref{Lem:betaplus} to create three models $A', B', C' \in \bk_{\gamma+1}$ that satisfy 
the following:
\begin{enumerate}
 \item $A'\subset B'$ and $A'\subset C'$
 \item $|A'|=|B'|=|C'|=\aleph_\gamma$
 \item $\omega_{\gamma}^{A'}=\omega_{\gamma}^{B'}=\omega_{\gamma}^{C'}=\omega_{\gamma}^N$
 \item $\omega_{\gamma+1}^{A'}=\omega^{A'}_{\gamma}+\omega$
 \item $\omega_{\gamma+1}^{B'}=\omega^{A'}_{\gamma}+\omega\cdot 2$
 \item $\omega_{\gamma+1}^{C'}=\omega^{A'}_{\gamma}+\omega+\mathbb{Q}$
 \end{enumerate}
 Then use Corollary \ref{Cor:alpha} to end-extend $A', B',C'$ to some $A,B,C\in\bkalpha$ such that $A\subset B$ and $A\subset 
 C$. Assume $D$ is an amalgam of $B$ and $C$ over $A$.
 Then $\omega_\gamma^D=\omega^N_\gamma$ by maximality of $N$.
 By Lemma \ref{endext},
$\omega_{\gamma+1}^D$ must be an end-extension
of both $\omega_{\gamma+1}^B$ and $\omega_{\gamma+1}^C$.
But this is impossible.  

For the case when $\gamma=0$ construct three models $A,B,C\in \bkalpha$ with 
$\omega_{1}^A=\omega\cdot 2$, $\omega_{1}^{B}=\omega\cdot 3$ and $\omega_{1}^{C}=\omega\cdot 2+\mathbb{Q}$. The same argument 
proves that they can not be amalgamated in $\bkalpha$.
\end{proof}

\begin{Thm}\label{apempty}

Assume $1\le\alpha<\omega$. 
The amalgamation spectrum of $\bkalpha$ is equal to $\{\aleph_\alpha\}$, if there are 
models in $\bkalpha$ of size $\aleph_\alpha$. Otherwise it is empty. 
\end{Thm}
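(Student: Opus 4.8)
The plan is to assemble Theorem~\ref{apempty} directly from the lemmas already established, treating it as essentially a bookkeeping argument. First I would observe that since $1\le\alpha<\omega$, every cardinal $\aleph_\gamma$ with $\gamma<\alpha$ is finite-indexed and strictly below $\aleph_\alpha$, and that by part (3) of Lemma~\ref{mainlemma} there are no models in $\bkalpha$ of size exceeding $\aleph_\alpha$ at all. Hence $\mathrm{AP\text{-}spec}(\bkalpha)\subseteq\{\aleph_0,\aleph_1,\dots,\aleph_\alpha\}$, and the whole problem reduces to deciding membership of each $\aleph_\gamma$ with $\gamma\le\alpha$.

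Next I would handle $\aleph_\alpha$: by definition of the amalgamation spectrum, $\aleph_\alpha\in\mathrm{AP\text{-}spec}(\bkalpha)$ iff the class of models of $\phi_\alpha$ of size $\aleph_\alpha$ is nonempty and has amalgamation. By Lemma~\ref{maximal}, every model in $\bkalpha$ of size $\aleph_\alpha$ is $\subset$-maximal, so the only amalgamation diagrams $A\subset B$, $A\subset C$ with all three of size $\aleph_\alpha$ are the trivial ones $A=B=C$, which amalgamate over themselves. Thus, whenever there is at least one model of size $\aleph_\alpha$, amalgamation in $\aleph_\alpha$ holds vacuously and $\aleph_\alpha$ lies in the spectrum; whenever there is no such model, $\aleph_\alpha$ is not in the spectrum.

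Then I would dispose of the cardinals strictly below $\aleph_\alpha$. For $\aleph_\gamma$ with $1\le\gamma<\alpha$: if $\bkalpha$ has no model of size $\aleph_\gamma$, then trivially $\aleph_\gamma\notin\mathrm{AP\text{-}spec}(\bkalpha)$. If it does have such a model, then Lemma~\ref{failap} (applied with $\beta=\gamma$) gives a failure of amalgamation in $\aleph_\gamma$, so again $\aleph_\gamma\notin\mathrm{AP\text{-}spec}(\bkalpha)$. The case $\aleph_0$ is identical, using Lemma~\ref{failap} with $\gamma=0$ (choosing any $\beta$ with $1\le\beta<\alpha$, which exists since $\alpha\ge1$; if $\alpha=1$ there is nothing below $\aleph_1$ to check except $\aleph_0$, and Lemma~\ref{failap} still applies with $\beta=\alpha$... wait). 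I should be careful here: Lemma~\ref{failap} requires $1\le\beta<\alpha$, so it is only available when $\alpha\ge 2$; for $\alpha=1$ the only cardinal below $\aleph_1$ is $\aleph_0$, and I would note separately that whether or not $\bkalpha$ contains countable models, the construction of Lemma~\ref{failap}'s $\gamma=0$ case needs $\beta$ with $1\le\beta<1$, which is vacuous — so for $\alpha=1$ I instead just observe that the relevant claim is about $\mathrm{AP\text{-}spec}(\bk_1)\cap\{\aleph_0\}$ and fold it into a direct argument, or simply note the theorem's conclusion $\mathrm{AP\text{-}spec}(\bk_1)\in\{\emptyset,\{\aleph_1\}\}$ follows since $\bk_1$ always has countable models but I'd need amalgamation to fail there too. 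The main obstacle is precisely this $\alpha=1$ edge case and making sure the hypotheses of Lemma~\ref{failap} (namely $1\le\beta<\alpha$) are actually met for every $\gamma\le\alpha-1$ we need to exclude; for $\alpha\ge 2$ it is routine, but for $\alpha=1$ I would need to either invoke a direct amalgamation-failure argument for countable models of $\phi_1$ or restrict the theorem's content accordingly. Combining the three cases yields: the spectrum is $\{\aleph_\alpha\}$ if there is a model of size $\aleph_\alpha$, and $\emptyset$ otherwise.
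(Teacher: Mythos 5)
Your assembly of the theorem is the same as the paper's: Lemma \ref{mainlemma}(3) caps the spectrum at $\aleph_\alpha$, Lemma \ref{failap} excludes every cardinal below $\aleph_\alpha$ (when models of that size exist at all), and Lemma \ref{maximal} makes amalgamation in $\aleph_\alpha$ hold vacuously, so the spectrum is $\{\aleph_\alpha\}$ or empty according to model existence in $\aleph_\alpha$. For $\alpha\ge 2$ your argument is exactly the paper's proof and is fine.

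The one genuine loose end is the case $\alpha=1$, which you flag but do not close. It cannot simply be ``restricted away'': $\bk_1$ always has countable models (coherent special $\aleph_1$-Aronszajn trees exist outright by Theorem \ref{Konig}, so $\phi_1$ is satisfiable, and satisfiability yields countable models), so the theorem for $\alpha=1$ really does require failure of amalgamation in $\aleph_0$ for $\bk_1$, and you are right that the literal hypothesis $1\le\beta<\alpha$ of Lemma \ref{failap} is vacuous when $\alpha=1$. The missing observation is that the $\gamma=0$ construction inside the proof of Lemma \ref{failap} makes no use of $\beta$: one builds countable $A\subset B$ and $A\subset C$ in $\bkalpha$ with $\omega_1^A\cong\omega\cdot 2$, $\omega_1^B\cong\omega\cdot 3$, $\omega_1^C\cong\omega\cdot 2+\mathbb{Q}$ (for $\alpha=1$ this is a direct construction with the trivial coherent pseudotree and an injective specializing map, exactly as in Lemma \ref{Lem:betaplus}), and then Lemma \ref{endext} forces $\omega_1$ of any amalgam to end-extend both $\omega\cdot 3$ and $\omega\cdot 2+\mathbb{Q}$, which is impossible. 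This works verbatim for every $\alpha\ge 1$, and it is what the paper's one-line citation of Lemma \ref{failap} is implicitly relying on; adding that remark (or restating the $\gamma=0$ case of Lemma \ref{failap} without the parameter $\beta$) completes your proof.
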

\begin{proof} First recall that by \ref{mainlemma}(3), $\bkalpha$ has no models of size greater than $\aleph_\alpha$. By Lemma 
\ref{failap} amalgamation fails for all cardinals below $\aleph_\alpha$. If 
there are models in $\bkalpha$ of size $\aleph_\alpha$, then $\aleph_\alpha$-amalgamation holds 
trivially by Lemma \ref{maximal}. In this case the amalgamation spectrum is equal to 
$\{\aleph_\alpha\}$. Otherwise, the amalgamation spectrum  is empty. 
 \end{proof}

\begin{Cor}\label{apone}
  The amalgamation spectrum of $\bk_1$ is $\{\aleph_1\}$.
\end{Cor}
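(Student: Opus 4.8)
The plan is to obtain this as the $\alpha=1$ instance of Theorem~\ref{apempty}. That theorem already tells us that AP-spec$(\bk_1)$ equals $\{\aleph_1\}$ if $\bk_1$ contains a model of size $\aleph_1$ and is empty otherwise, so the whole task reduces to exhibiting such a model outright in ZFC. By Lemma~\ref{mainlemma}(2) with $\alpha=1$ — where the only $\beta<\alpha$ is $\beta=0$ and $\alp{0}=\aleph_1$ — it is enough to produce a \csat{\aleph_1}. I would get one from Theorem~\ref{Konig} applied with $\lambda=\omega$: $\square_\omega$ is a theorem of ZFC (every limit ordinal below $\omega_1$ has cofinality $\omega$, so a coherent $\square_\omega$-sequence is built by an easy recursion along $\omega_1$), whence a \csat{\aleph_1} exists in ZFC. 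Thus $\phi_1$ has a model of size $\aleph_1$, and by Lemma~\ref{maximal} every such model is $\subset$-maximal, so the class of size-$\aleph_1$ models of $\phi_1$ has the amalgamation property vacuously (a $\subset$-extension of a maximal model is the model itself). Hence $\aleph_1\in$ AP-spec$(\bk_1)$, and by Lemma~\ref{mainlemma}(3) there is no model of $\phi_1$ of size $>\aleph_1$, so no larger cardinal enters the spectrum.

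It remains to confirm that $\aleph_0\notin$ AP-spec$(\bk_1)$. This is the content of the $\gamma=0$ case in the proof of Lemma~\ref{failap}, and although the hypothesis of that lemma formally requires a $\beta$ with $1\le\beta<\alpha$ (vacuous when $\alpha=1$), the $\gamma=0$ construction itself never uses $\beta$ and goes through verbatim for $\bk_1$: build countable models $A\subset B$ and $A\subset C$ in $\bk_1$ that agree on $\omega_0$ (necessarily $\cong(\omega,<)$) and satisfy $\omega_1^A\cong\omega\cdot 2$, $\omega_1^B\cong\omega\cdot 3$, $\omega_1^C\cong\omega\cdot 2+\mathbb{Q}$, with $B,C$ end-extending $A$; in the countable case the coherent, special, downward-closed pseudotree $T_0$ and its witness $S_0$ can be supplied trivially (e.g.\ take $T_0$ to be a single chain and let $S_0$ compose the rank with any injection of $\omega_1^{\fA}$ into $\omega_0$). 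If some $D\in\bk_1$ amalgamated $B$ and $C$ over $A$, then $\omega_0^D\cong(\omega,<)$ forces $\omega_0^D=\omega_0^B=\omega_0^C$, so by Lemma~\ref{endext} $(\omega_1^D,<)$ would end-extend both $\omega\cdot 3$ and $\omega\cdot 2+\mathbb{Q}$; but above their common initial segment $\omega\cdot 2$ the first order has a least element and the second does not, so no common end-extension exists. Therefore AP-spec$(\bk_1)=\{\aleph_1\}$.

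The only point requiring care is the first one: recognizing that the relevant coherent special $\aleph_1$-Aronszajn tree is available in plain ZFC (rather than under GCH, say), via Theorem~\ref{Konig} with $\lambda=\omega$. After that, everything is a direct appeal to Theorem~\ref{apempty}, Lemma~\ref{maximal}, Lemma~\ref{mainlemma}, and the $\gamma=0$ argument of Lemma~\ref{failap}, with the minor observation that the last of these does not actually depend on the auxiliary parameter $\beta$.
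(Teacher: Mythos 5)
Your derivation of $\aleph_1\in\text{AP-spec}(\bk_1)$ is exactly the paper's route: $\square_{\aleph_0}$ holds outright in ZFC, so Theorem~\ref{Konig} yields a \csat{\aleph_1}, Lemma~\ref{mainlemma}(2) then gives a model of $\phi_1$ of size $\aleph_1$, Lemma~\ref{maximal} makes $\aleph_1$-amalgamation trivial, and Lemma~\ref{mainlemma}(3) rules out larger models. You are also right that Lemma~\ref{failap} as stated requires some $\beta$ with $1\le\beta<\alpha$ and so does not formally cover $\alpha=1$, which is why the exclusion of $\aleph_0$ from the spectrum needs a separate check.

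The check you supply, however, has a genuine gap. The step ``$\omega_0^D\cong(\omega,<)$ forces $\omega_0^D=\omega_0^B=\omega_0^C$'' is not valid: in an amalgam $D$ one only knows $\omega_0^B=\omega_0^D\cap B$, an infinite (hence cofinal) subset of $\omega_0^D$, and a proper infinite subset of a copy of $\omega$ is again a copy of $\omega$; no axiom of $\phi_1$ forbids $\omega_0^D\supsetneq\omega_0^B$. Without that equality Lemma~\ref{endext} does not apply, so $\omega_1^D$ need not end-extend $\omega_1^B$ or $\omega_1^C$, and order types alone give no contradiction, since $\omega\cdot 3$ embeds as a (non-initial) suborder of $\omega\cdot 2+\mathbb{Q}$. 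Worse, with your own trivial choices --- $T_0$ a single chain of constant functions and $S_0(x,y)=h(x)$ for injections $h_B,h_C$ extending $h_A$ with disjoint new ranges --- your triple $(A,B,C)$ \emph{can} be amalgamated: interleave a fresh infinite set $\{d_n\suchthat n<\omega\}$ into $\omega_0^D$, place the copy of $\mathbb{Q}$ above the top copy of $\omega$, keep all relations on $B\cup C$ as given, and on tuples involving the new points extend $L_0$ so that each $L_0(x,\bullet)$ is surjective onto the enlarged cone, extend $T_0$ by the same constant functions, and give $S_0$ fresh values on the new points; all clauses of $\phi_1$ and the substructure conditions are then satisfied, so the end-extension obstruction evaporates. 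Hence your argument does not establish $\aleph_0\notin\text{AP-spec}(\bk_1)$, and a more carefully constrained triple (for instance one exploiting the specializing functions themselves, which in any amalgam must take distinct values on comparable levels) is needed; note that the same difficulty is glossed over in the $\gamma=0$ clause of the proof of Lemma~\ref{failap}, since the maximality argument used there for $\gamma=\beta>0$ has no analogue at level $0$.
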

\begin{proof} The existence of \csat{\aleph_1} follows from Theorem \ref{Konig}, because $\square_{\aleph_0}$ holds 
trivially. 
\end{proof}

\begin{Thm}\label{nonabsoluteness} The following statements are \emph{not absolute} for transitive models of ZFC.
\begin{enumerate}[(a)]
 \item The amalgamation spectrum of an $\lomegaone$-sentence is empty. 
\item For finite $n\ge 2$ and $\phi$ an $\lomegaone$-sentence, $\aleph_n$ belongs to the amalgamation spectrum of $\phi$.
\end{enumerate}
The results remain true even if we consider transitive 
models of ZFC+GCH.
\end{Thm}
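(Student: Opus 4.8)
The plan is to derive both non-absoluteness statements from Theorem~\ref{apempty}, once I exhibit, for a fixed finite $n\geq 2$, two transitive models of ZFC+GCH with the same ordinals in which the amalgamation spectrum of the \emph{same} sentence $\phi_n$ takes different values — indeed, in one it will be $\{\aleph_n\}$ and in the other empty. I read statement (a) as: there is an $\lomegaone$-sentence $\phi$ (a parameter sentence) for which the assertion that $\text{AP-spec}(\phi)$ is empty is not absolute, and (b) analogously with the parameter sentence and $n$ fixed. Because $n$ is a natural number, $\phi_n$ is literally the same $\lomegaone$-sentence in every transitive model of ZFC, so it is a legitimate parameter; moreover the construction below does not depend on $n$, so a single pair of models handles every finite $n\geq 2$ at once.

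For the two models I would argue as follows. Assuming (as already for Theorem~\ref{maintheorem}) the consistency of a Mahlo cardinal, work inside a transitive model of ZFC with a Mahlo cardinal and let $M:=L$ computed there. Then $M$ is a transitive model of ZFC+GCH, $\square_{\aleph_\beta}$ holds in $M$ for every $\beta$ (as in $L$), and $M$ still contains a Mahlo cardinal $\kappa$ (Mahloness is downward absolute to $L$). By Theorem~\ref{maintheorem}(1), $\phi_n$ has a model of size $\aleph_n$ in $M$, so by Theorem~\ref{apempty}, $\text{AP-spec}(\phi_n)=\{\aleph_n\}$ in $M$. Now let $W:=M[G]$ be the extension of $M$ by the Levy collapse of $\kappa$ to $\omega_2$ as computed in $M$; then $W$ is a transitive model of ZFC with the same ordinals as $M$, $M\subseteq W$, and $W$ satisfies GCH. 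As in the proof of Theorem~\ref{maintheorem}(2), $\phi_n$ has no model of size $\geq\aleph_2$ in $W$: Todor\v cevi\'c's construction behind Theorem~\ref{Todorcevic} makes every \sat{\aleph_2}\ in $W$ contain a copy of $(2^{<\omega_1},\subset)$, hence a copy of $(2^{\leq\omega},\subset)$; on the other hand, from a model of $\phi_n$ of size $\geq\aleph_2$ Lemma~\ref{mainlemma}(1) would yield a coherent pseudotree $T$ of cofinality $\omega_2$ with an order embedding of some \sat{\aleph_2}\ into it, and composing the two embeddings would produce a copy of $(2^{\leq\omega},\subset)$ inside $T$, contradicting Lemma~\ref{nocantorsubtree}. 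Since $n\geq 2$ and, by Lemma~\ref{mainlemma}(3), $\phi_n$ has no model of size $>\aleph_n$ either, $\phi_n$ has no model of size $\aleph_n$ in $W$, so Theorem~\ref{apempty} gives that $\text{AP-spec}(\phi_n)$ is empty in $W$.

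The pair $(M,W)$ then witnesses everything: $\text{AP-spec}(\phi_n)$ is nonempty in $M$ but empty in $W$, which proves (a), while $\aleph_n\in\text{AP-spec}(\phi_n)$ holds in $M$ but fails in $W$, which proves (b) for this $n$ and hence, by the remark above, for every finite $n\geq 2$. Since $M$ and $W$ both satisfy ZFC+GCH, the final sentence of the theorem is covered as well. The only step requiring genuine care — the rest being bookkeeping with results already in hand — is the interface with Todor\v cevi\'c's theorem: one must confirm that performing the Levy collapse of the Mahlo $\kappa$ over $L$ rather than over the ambient universe still yields a model of GCH in which every special $\aleph_2$-Aronszajn tree contains a copy of $2^{<\omega_1}$, and that the finiteness of $n$ keeps $\phi_n$ the same sentence in $M$ and $W$, so that $\text{AP-spec}(\phi_n)$ denotes the same object in both.
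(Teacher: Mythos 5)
Your proposal is correct and follows essentially the same route as the paper, whose proof is simply the combination of Theorem~\ref{apempty} with Theorem~\ref{maintheorem}; you merely unwind the proof of Theorem~\ref{maintheorem}(2) (the $L$-with-a-Mahlo ground model, the Levy collapse, and the Todor\v cevi\'c/coherence contradiction) to exhibit the two transitive models of ZFC+GCH explicitly. No gaps.
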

\begin{proof} By Theorem \ref{apempty} and Theorem \ref{maintheorem}.
\end{proof}

A couple of notes: Theorem \ref{nonabsoluteness} covers all cardinals $\aleph_n$, with $n$ finite and $n\ge 2$. For $n=0$, 
$\aleph_0$-amalgamation is absolute by an easy application of Shoenfield's absoluteness. The question for $n=1$ remains open. 

Lastly we prove that our examples can not be used to resolve the absoluteness question of $\aleph_\alpha$-amalgamation, for 
$\omega\le\alpha<\omega_1$,  under GCH. The reason is that in this case $\bkalpha$ has empty amalgamation spectrum. 

\begin{Lem}\label{treesurgery}
  Assume $\omega\le\alpha<\omega_1$ and $M\in\bkalpha$.
  Let $K$ be a countable linear order.
  Then there is a model $R$ in  $\bkalpha$ of size $\card{M}$
  such that $\omega_1^R\setminus\omega_0^R\cong K$.

  Moreover, we may choose $R$ such that,
  for each countable linear order $J$ that end-extends $K$,
  there exists $N\in\bkalpha$ such that $R\subset N$,
  $\card{N}=\card{R}$, and $\omega_1^N\setminus\omega_0^N\cong J$.
\end{Lem}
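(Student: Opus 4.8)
The plan is to build $R$ from $M$ by splicing a fresh countable level into position $1$ and re-indexing $M$'s levels by $\gamma\mapsto 1+\gamma$; this is a genuine upward shift only below $\omega$ (since $1+\gamma=\gamma$ for $\gamma\ge\omega$), so no level of $M$ is ever ``lost'' and the cardinality is unchanged because $1+\alpha=\alpha$ for $\alpha\ge\omega$. Concretely, I would fix a bijection $c$ from the universe of $M$ onto a set disjoint from $\omega+K$, let $R$ have universe $(\omega+K)\cup c[M]$, and order it by the natural order on $\omega+K$, the $c$‑transport of $<^M$ on $c[M]$, and ``everything in $\omega+K$ below everything in $c[M]$''. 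Set $\omega_0^R=\omega$, $\omega_1^R=\omega+K$, and $\omega_{1+\gamma}^R=(\omega+K)\cup c[\omega_\gamma^M]$ for $1\le\gamma\le\alpha$. Using continuity and downward-closure of $M$'s tower, this tower is continuously increasing, it is downward closed at each stage, it exhausts the universe at the stage $1+\alpha=\alpha$, and $\omega_1^R\setminus\omega_0^R\cong K$; moreover $|R|=\aleph_0+|M|=|M|$.

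For the pseudotree data on a successor level $\delta+1=1+(\gamma_0+1)$ of $R$ (so $\delta=1+\gamma_0$; the remaining successor level is $1$ itself), I would fix a surjection $\pi\colon\omega_\delta^R\to\omega^M_{\gamma_0}$ and declare: over a level $x\in\omega+K$, every node is the constant function $0^R$ on $\downarrow x$; over a level $c(x')$ with $x'\in\omega^M_{\gamma_0+1}$, the nodes are $T^R_\delta(c(x'),y,\bullet):=\tilde s$, where $s=T^M_{\gamma_0}(x',\pi(y),\bullet)$ and $\tilde s$ is constantly $0^R$ on $\omega+K$ and the $c$‑transport of $s$ on $c[\dom s]=c[\downarrow^M x']$ (note $\dom\tilde s=(\omega+K)\cup c[\downarrow^M x']=\downarrow c(x')$). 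Pick each $L^R_\delta(x,\bullet)$ to be an arbitrary surjection $\omega_\delta^R\to\downarrow x$; these exist since $|\downarrow x|\le\aleph_0+|\omega^M_{\gamma_0}|=|\omega_\delta^R|$, using condition (5) for $M$. For the colouring, partition $\omega^R_\delta=A\sqcup B$ with $A$ countably infinite and $|B|=|\omega^M_{\gamma_0}|$, fix injections $\iota\colon\omega+K\to A$ and $\theta\colon\omega^M_{\gamma_0}\to B$, and put $S^R_\delta(x,y)=\iota(x)$ for $x\in\omega+K$ and $S^R_\delta(c(x'),y)=\theta\bigl(S^M_{\gamma_0}(x',\pi(y))\bigr)$. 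On level $1$ use the constant tree together with any injective $S^R_0$ that ignores its second coordinate.

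Verifying (1)--(9) of $\phi_\alpha$ for $R$ is then routine: (1)--(5) and (8) are immediate; (6) holds because over $c(x')$ any two nodes agree (constantly $0^R$) on $\omega+K$ and are $=^*$ on $c[\dom]$ by coherence of $T^M_{\gamma_0}$; (7) holds because a node over $c(x')$ restricts to a constant function below any level of $\omega+K$ and to the $c$‑transport of the corresponding restriction in $M$ below a level $c(w')$, by downward-closure of $M$'s tree. The only point needing an idea is specialness (9): along any $\subset$‑chain of the new tree the nodes over $\omega+K$ get pairwise distinct colours in $A$ (as $\iota$ is injective), the nodes over $c[M]$ get pairwise distinct colours in $B$ (as $\theta$ is injective and $S^M_{\gamma_0}$ separates $\subset$‑comparable nodes of $M$), and $A\cap B=\emptyset$; so $S^R_\delta$ separates $\subset$‑comparable nodes of $R$. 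This yields $R\in\bkalpha$ with the required properties (and, in the Skolemised presentation of the Remark, the predicates $C^\beta_n,P$ are simply read off from $T^R$). For the ``moreover'' clause, given a countable $J$ end-extending $K$, I would rerun the construction with $J$ in place of $K$, choosing $c$, $\pi$, $\iota$, $\theta$ and the partition for the new model $N$ so that they restrict to the corresponding data for $R$; this is possible because $J$ end-extends $K$ (so $\omega+K$ is an initial segment of $\omega+J$ and each $\omega_\beta^R$ is an initial segment of $\omega_\beta^N$) and because $\omega^N_\delta\supseteq\omega^R_\delta$ supplies extra colours. One then checks that the $T,L,S$ data of $N$ restrict to those of $R$, giving $R\subset N$, $|N|=|R|$, and $\omega_1^N\setminus\omega_0^N\cong J$, exactly paralleling the ``moreover'' clauses of Lemma~\ref{Lem:betaplus} and Corollary~\ref{Cor:alpha}. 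There is no deep obstacle here: the work is the index bookkeeping under $\gamma\mapsto 1+\gamma$ and keeping the spliced pseudotree both coherent and special, the latter being exactly what the disjoint palette $A\sqcup B$ delivers.
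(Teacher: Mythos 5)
Your construction of $R$ itself is fine, and it is in fact a slightly cleaner variant of the paper's: instead of inserting the new block into $M$'s own order just above $\omega$ (which forces the paper to splice new nodes into every old pseudotree at the missing levels, via the enumerations $(g_\beta,h_\beta)$, to keep downward closure), you stack a disjoint copy of $M$ above the whole block $\omega+K$ and pad every old node by the constant $0$ on that block; then the new levels only need the constant node, downward closure and coherence are immediate, and your disjoint $A\sqcup B$ palette gives specialness. Since the lemma asks only for \emph{some} $R\in\bkalpha$ of size $\card{M}$, replacing $M$ by a shifted copy is harmless.

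The gap is in the ``moreover'' clause, at the bottom pseudotree. For $R\subset N$ you must have $S_0^N\supseteq S_0^R$ (these are functional predicates and all relevant triples lie in $R$), and $S_0^N$ must be injective on the levels $\omega+J$ with values in $\omega_0^N$. But $\omega_0^N=\omega_0^R=\omega$ does not grow, so your stated justification --- ``$\omega_\delta^N\supseteq\omega_\delta^R$ supplies extra colours'' --- is exactly false at $\delta=0$, and indeed you omit $S_0$ from the list of data to be extended. If, as you allow, $S_0^R$ is ``any injective'' map $\omega+K\to\omega$, it may well be a bijection (natural when $K$ is infinite), and then no injective extension to $\omega+J$ exists for any $J\supsetneq K$, so no $N\supset R$ can be produced by your recipe. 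The fix is the one the paper builds in from the start: choose the bottom colouring as an injection $g\colon\omega+K\to\omega$ with co-infinite range (co-infinite, since $J\setminus K$ may be infinite and $R$ must be fixed before $J$ is given), and only then extend to an injection $f\colon\omega+J\to\omega$ for the given $J$. With that one adjustment your argument goes through; note also that your parenthetical claim that each $\omega_\beta^R$ is an initial segment of $\omega_\beta^N$ is false for $\beta\ge 2$ in your architecture (the points of $J\setminus K$ sit below the copy of $\omega^M_{\sigma(\beta)}$), but all you actually need is $\omega_\beta^N\cap R=\omega_\beta^R$ together with agreement of the $L,T,S$ data on arguments from $R$, which your construction does provide once the bottom colouring is handled as above.
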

\begin{proof}
  Given $K$ and $J$, we will construct $R$ and $N$ in parallel,
  taking care that $R$ depends on $K$ but not on $J$.
  The idea is that we move all the pseudotrees of $M$ one level higher
  and introduce a new pseudotree at the bottom.

  For each $\beta\le\alpha$, define \(\sigma(\beta)\) to be
  $\beta-1$ if $0<\beta<\omega$ and $\beta$ otherwise.
  Without loss of generality, assume $(\omega_0^M,<^M)=(\omega,\in)$;
  then define $\omega_0^N=\omega$ and
  $\omega_\beta^N=\omega+J+(\omega_{\sigma(\beta)}^M\setminus\omega)$
  for all $\beta>0$. In particular, $\omega_1^N=\omega+J$ and
  $\omega_2^N=\omega+J+(\omega_1^M\setminus\omega)$.
  For all $\beta\le\alpha$,
  let $\omega_\beta^R=\omega_\beta^N\setminus(J\setminus K)$.

  Next, we define the bottom pseudotree of $N$
  and, implicitly, the bottom pseudotree of $R$.
  Choose an injection $g\colon\omega+K\to\omega$ with
  co-infinite range and then extend it to an injection
  $f\colon\omega+J\to\omega$.
  For each $x\in\omega+J$ and $y\in\omega$, declare that:
  \begin{itemize}
  \item $L_0^N(x,\bullet)$ is an arbitrary surjection
    from $\omega$ to $\downarrow^N x$ if $x\not=0$.
  \item $T_0^N(x,y,z)=0$ for all $z<^N x$.
  \item $S_0^N(x,y)=f(x)$.
  \end{itemize}
  The above implicitly defines $L_0^R$, $S_0^R$, and $T_0^R$
  so that they depend on $K$ but not on $J$.

  Given $1\le\beta<\alpha$,
  observe that $\omega_\beta^N=\omega_{\sigma(\beta)}^M\cup J$ and
  $\omega_{\beta+1}^N=\omega_{\sigma(\beta)+1}^M\cup J$.
  Then declare the following for each
  $x\in\omega_{\beta+1}^N\setminus\{0\}$ and $y\in\omega_\beta^N$.
  \[L_\beta^N(x,y)=\begin{cases}
  L_{\sigma(\beta)}^M(x,y)&:x,y\not\in J\\
  y&:x>^N y\in J\\
  0&:x\le^N y\in J
  \end{cases}\]
  The above defines our needed surjections $L^N_\beta(x,\bullet)$
  and $L^R_\beta(x,\bullet)$, except in the case where $x\in J$ and $y\in \omega^M_{\sigma(\beta)}$.
  To define $L^N_\beta$ in this case, fix some surjection $f_\beta$ from $\omega^M_{\sigma(\beta)}$ to $\omega$ and let 
$L^N_\beta(x,y)=f_\beta(y)$. 

This completes the definition of $L^N_\beta(x,\bullet)$
  and $L^R_\beta(x,\bullet)$, and notice that the latter
  depends on $K$ but not on $J$.

  Next, we begin defining the $\beta^{\text{th}}$ pseudotree of $N$
  by splicing constant functions into
  the $\sigma(\beta)^{\text{th}}$ pseudotree of $M$.
  For each $x\in\omega_{\sigma(\beta)+1}^M$ and $y\in\omega_{\sigma(\beta)}^M$,
  let \[T_\beta^N(x,y,z)=
  \begin{cases}
    T_{\sigma(\beta)}^M(x,y,z)&:x>^N z\not\in J\\
    0&:x>^N z\in J
  \end{cases}.\]
  The parts of the pseudotrees $T_\beta^N$ and $T_\beta^R$
  defined so far are coherent.
  We now fill in the missing levels indexed by $J$
  so as to also achieve downward closure.
  Fix $x_0\in\omega_1^M\setminus\omega$ and
  let $(g_\beta,h_\beta)$ map $\omega_{\sigma(\beta)}^M$ onto the set of pairs
  \[\left\{(x,y)\suchthat x_0\ge^M x\not\in\omega
  \text{ and }y\in\omega_{\sigma(\beta)}^M\right\}.\]
  Then declare that, for each $x\in J$ and $y\in\omega_{\sigma(\beta)}^M$,
  \[T_\beta^N(x,y,z)=
  \begin{cases}
    T_{\sigma(\beta)}^M(g_\beta(y),h_\beta(y),z)&:z\in\omega\\
    0&:x>^N z\in J
  \end{cases}.\]
  Observe that so far $T_\beta^R$ depends on $K$ but not on $J$.

  To witness specialness, declare that, for each
  $x\in\omega_{\beta+1}^N$ and $y\in\omega_{\sigma(\beta)}^M$,
  \[S_\beta^N(x,y)=
  \begin{cases}
    S_{\sigma(\beta)}^M(x,y)&:x\not\in J\\
    x&:x\in J
  \end{cases}.\]
  Finally, for each $x\in\omega_{\sigma(\beta)+1}^M$ and $y\in J$,
  let $T_\beta^N(x,y,\bullet)=T_\beta^N(x,0,\bullet)$
  and $S_\beta^N(x,y)=S_\beta^N(x,0)$.
  This completes the construction of $N$ and $R$.
  We have also implicitly defined $T_\beta^R$ and $S_\beta^R$
  so that they depend on $K$ but not on $J$.
\end{proof}

\begin{Thm} Assume $\omega\le\alpha<\omega_1$. 
The amalgamation spectrum of $\bkalpha$ is empty.
\end{Thm}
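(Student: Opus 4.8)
The plan is to run the argument of Lemma~\ref{failap} one level lower, with Lemma~\ref{treesurgery} supplying the three models in place of Lemmas~\ref{Lem:beta}--\ref{Cor:alpha}. Since a cardinal belongs to the amalgamation spectrum of $\bkalpha$ only if $\bkalpha$ has a model of that size, and since by Lemma~\ref{mainlemma}(3) no cardinal above $\aleph_\alpha$ is realized, it suffices to prove: for every $M\in\bkalpha$, amalgamation fails in cardinality $\kappa:=\card{M}$.

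So fix $M\in\bkalpha$. First I would apply Lemma~\ref{treesurgery} with $K$ the order type $\omega$ to obtain $A:=R\in\bkalpha$ with $\card{A}=\kappa$ and $\omega_1^A\setminus\omega_0^A\cong\omega$; thus $\omega_0^A\cong(\omega,\in)$ and $\omega_1^A\cong\omega\cdot 2$. Then I would use the ``moreover'' clause of Lemma~\ref{treesurgery} twice with the \emph{same} $A$, first for the linear order $J_1:=\omega+\omega$ and then for $J_2:=\omega+\mathbb{Q}$ (both of which end-extend $K=\omega$), to get $B,C\in\bkalpha$ with $A\subset B$, $A\subset C$, $\card{B}=\card{C}=\kappa$, $\omega_1^B\setminus\omega_0^B\cong J_1$, and $\omega_1^C\setminus\omega_0^C\cong J_2$, so that $\omega_1^B\cong\omega\cdot 3$ and $\omega_1^C\cong\omega\cdot 2+\mathbb{Q}$. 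Because $A\subset B$ and $\omega_0^A=\omega_0^B$ (the surgery leaves $\omega_0$ fixed), Lemma~\ref{endext} with $\beta=0$ shows $(\omega_1^B,<)$ end-extends $(\omega_1^A,<)$; as the only initial segment of $\omega\cdot 3$ of order type $\omega\cdot 2$ is $\omega\cdot 2$ itself, $\omega_1^B\setminus\omega_1^A\cong\omega$, and likewise $\omega_1^C\setminus\omega_1^A\cong\mathbb{Q}$.

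Now suppose toward a contradiction that $D\in\bkalpha$ amalgamates $B$ and $C$ over $A$, via substructure embeddings $f\colon B\to D$ and $g\colon C\to D$ with $f\restrict A=g\restrict A$. Exactly as in the case $\gamma=0$ of the proof of Lemma~\ref{failap}, the predicate $\omega_0$ is interpreted as a copy of $(\omega,\in)$ in each of these models, so $\omega_0^D=f(\omega_0^B)=g(\omega_0^C)$, and Lemma~\ref{endext} at level $0$ applies to $f(B)\subseteq D$ and to $g(C)\subseteq D$: both $f(\omega_1^B)$ and $g(\omega_1^C)$ are initial segments of $(\omega_1^D,<)$. Being initial segments of one linear order, one of them is an initial segment of the other, while each contains the common initial segment $f(\omega_1^A)=g(\omega_1^A)$. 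If $f(\omega_1^B)$ is an initial segment of $g(\omega_1^C)$, then $f(\omega_1^B)\setminus f(\omega_1^A)\cong\omega$ is a nonempty initial segment of $g(\omega_1^C)\setminus g(\omega_1^A)\cong\mathbb{Q}$, which is impossible because a nonempty initial segment of $\mathbb{Q}$ has no least element whereas $\omega$ does. If instead $g(\omega_1^C)$ is an initial segment of $f(\omega_1^B)$, then $\mathbb{Q}$ is order isomorphic to a nonempty initial segment of $\omega$, impossible since every subset of $\omega$ is well ordered by $<$. Either way we reach a contradiction, so $B$ and $C$ have no amalgam in $\bkalpha$ over $A$, and amalgamation fails in $\kappa$. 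As $M$ was arbitrary, the amalgamation spectrum of $\bkalpha$ is empty.

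I expect the only delicate point to be the correct use of Lemma~\ref{treesurgery}: one must check that a single model $A=R$ can be simultaneously end-extended (in the level-$1$ linear order) to $B$ realizing $J_1$ and to $C$ realizing $J_2$ --- which is precisely what the ``moreover'' clause provides --- and that $\omega_0$ is untouched, so that Lemma~\ref{endext} can be invoked at level $0$ both for $A\subset B$ and for $f(B),g(C)\subseteq D$. Once $A,B,C$ are in hand, the remainder is the familiar failure of amalgamation of linear orders under end-extension, identical to the argument already carried out for Lemma~\ref{failap}.
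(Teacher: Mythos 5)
Your proposal is correct and takes essentially the same route as the paper: the same non-amalgamable triple with $\omega_1$-parts $\omega\cdot 2$, $\omega\cdot 3$, $\omega\cdot 2+\mathbb{Q}$ produced by Lemma~\ref{treesurgery} (using its ``moreover'' clause twice over one base model), followed by the same contradiction via Lemma~\ref{endext} and failure of amalgamation of linear orders under end-extension. The only difference is cosmetic: you run this construction uniformly at every cardinality realized by $\bkalpha$, whereas the paper quotes Lemma~\ref{failap} for the sizes $\aleph_\beta$, $\beta<\alpha$, and reserves the Lemma~\ref{treesurgery} argument for $\aleph_\alpha$.
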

\begin{proof}
 By Lemma \ref{failap}, $\bkalpha$ fails amalgamation in $\aleph_\beta$, for all $\beta<\alpha$. We prove that this is the case 
even for $\beta=\alpha$. If $\bkalpha$ has no models of size $\aleph_\alpha$, the result is trivial. So, assume that is a model 
$M$ of size $\aleph_\alpha$.

We use the same method as for Lemma \ref{failap}. In particular, we construct a triple $(A,B,C)$ with the following properties:
\begin{enumerate}
 \item $A\subset B$ and $A\subset C$
 \item $|A|=|B|=|C|=\aleph_\alpha$
 \item $\omega_{1}^A=\omega\cdot 2$
 \item $\omega_{1}^B=\omega\cdot 3$
 \item $\omega_{1}^C=\omega\cdot 2+\mathbb{Q}$
\end{enumerate}
This is possible by applying Lemma \ref{treesurgery} twice; once for the pair $\omega\cdot 2\subset\omega\cdot 3$ and a second time for the pair 
$\omega\cdot 2\subset\omega\cdot 2+\mathbb{Q}$. 

As in Lemma \ref{failap}, if $D$ were an amalgam of $B$ and $C$ over $A$, then $\omega_1^D$ would must be an end-extension 
of both $\omega_{1}^B$ and $\omega_{1}^C$, which is impossible.  
\end{proof}

\section{Open Problems}
 The following are some questions that remain open. Some of the questions do not bear much resemblance to the results of this 
paper. Nevertheless we encountered these questions during our search for a proof to Theorem \ref{maintheorem}.
 \begin{enumerate}
  \item  Can the consistency strength of our non-absoluteness theorem  be further reduced? In particular, is it  
possible to prove the same result without any large cardinal assumptions? 
\item If $\alpha$ is countably infinite, is $\aleph_\alpha$-amalgamation non-absolute for transitive models of ZFC+GCH?
\item Is $\aleph_1$-amalgamation for $\lomegaone$-sentences absolute for  transitive models of ZFC? 
\item The way we proved non-absoluteness of amalgamation in $\aleph_n$, for finite $n$, is by choosing appropriate set-theoretic 
assumptions that affect the model-existence spectrum. If there are no models in $\aleph_n$ then the amalgamation question 
becomes void. Can we prove non-absoluteness of amalgamation in $\aleph_n$ in the following stronger form: There are two 
transitive 
models of ZFC, say $V\subset W$, with the same ordinals, a 
sentence $\phi$ that belongs to $(\lomegaone)^V$, both $V$ and $W$ satisfy ``$\phi$ has a model of size $\aleph_n$'', and 
$V,W$ disagree on ``models of $\phi$ of size $\aleph_n$ satisfy amalgamation''? Same question is open for 
$\aleph_n$-joint 
embedding.
\item The principle $\square_\kappa$ asserts the existence of a square sequence, i.e. a sequence $<C_\alpha|\alpha\in 
Lim(\kappa^+)>$ that satisfies (i) $C_\alpha$ is a club of $\alpha$, (ii) if $cf(\alpha)<\kappa$, then $|C_a|<\kappa$ and 
 (iii) if $\beta\in Lim(C_\alpha)$, then $C_\beta=C_\alpha\cap \beta$. Are there any $\kappa^+$-like linear orders 
$(L,<)$ (other than well-orders) for which the existence of a sequence $<C_\alpha|\alpha\in Lim(L)>$ that satisfies (i)-(iii) is 
independent of ZFC?
\item The proof of Lemma~\ref{mainlemma} does not quite recover
  a \csat{\aleph_2}\ from an $\aleph_\alpha$-sized model of $\phi_\alpha$.
  It merely recovers a \sat{\aleph_2} that embeds in a coherent pseudotree.
  Is there an $\lomegaone$-sentence for which
  existence of a model of size $\aleph_2$
  entails a \csat{\aleph_2}?
\item One strategy for reducing our large cardinal assumption
  from Mahlo to inaccessible is to attempt to code Kurepa trees
  using formulas satisfied by higher-gap simplified morasses.
  The following test question captures
  the core obstacle to this strategy.
  Assume $V=L$. Choose $\fM=(L(\delta),\in)$ such that
  $\omega_4<\delta<\omega_5$ and $\fM\prec(L(\omega_5),\in)$.
  Let $G$ be a generic filter of the Miller-like version of Namba forcing.
  (This forcing is the Nm defined in XI.4.1 of~\cite{Shelah};
  the Laver-like version of Namba forcing
  is the $\mathrm{Nm}'$ defined in the remark after XI.4.1A.)
  Then, in $V[G]$, GCH and the regularity of $\omega_1^V$ and $\omega_4^V$
  are preserved but $\cf(\omega_2^V)$ collapses to $\omega$
  and $\cf(\omega_3^V)$ collapses to $\omega_1$.
  In $V$, let $\psi$ be an $\lomegaone$ formula that defines
  a binary relation on the structure $\fM$,
  possibly using parameters from $\fM$.
  Can $\psi$ be chosen independently of $G$ such that in $V[G]$
  we have $\card{\dom\left(\psi^{\fM}\right)}=\aleph_2$,
  $\card{\ran\left(\psi^{\fM}\right)}=\aleph_1$, and
  $\left\{\psi^{\fM}[x]\cap y\suchthat x\in\dom\left(\psi^{\fM}\right)\right\}$
  countable for all countable sets $y$?
\item In the above test question, we specified ``Miller-like''
  because we can prove, assuming CH, that this version of Namba forcing
  does not add cofinal branches to $\omega_1$-trees in the ground model,
  thus opening the door to rcs iterated forcing extensions without
  any Kurepa trees (assuming an inaccessible in the ground model).
  However, our proof's fusion argument works for Miller and Sacks
  but not Laver type tree forcings.
  This leads us to ask, is it consistent with CH that Laver forcing
  adds a cofinal branch to some $\omega_1$-tree in the ground model?
\end{enumerate}

\normalsize
\baselineskip=17pt

\end{document}